\DeclareMathOperator{\A}{A}
\DeclareMathOperator{\D}{D}
\DeclareMathOperator{\R}{R}
\DeclareMathOperator{\At}{At}
\newcommand{\id}{1}
\DeclareMathOperator{\down}{\downarrow}
\DeclareMathOperator{\dom}{dom}
\DeclareMathOperator{\ran}{range}
\newcommand{\rest}{\mathbin \vartriangleright}
\newcommand{\arest}{\rest'}
\newcommand{\compo}{\mathbin{;}}
\newcommand{\compl}[1]{\overline{#1}}
\newcommand{\intersect}{\wedge}
\newcommand{\bjoin}{\vee}
\newcommand{\meet}{\bigwedge}
\newcommand{\join}{\bigvee}
\newcommand{\stcomp}[1]{{#1}^{\mathsf{c}}}
\newcommand{\defn}[1]{\textbf{#1}}
\newcommand{\algebra}[1]{\mathfrak{#1}}
\newcommand{\powerset}{\raisebox{0.45ex}{$\wp$}}
\newcommand{\pref}{\sqcup}
\newcommand{\update}[2]{#1[#2]}
\newcommand{\updatesymb}{[\phantom x]}
\newcommand\from\colon
\theoremstyle{plain}
\newtheorem{theorem}{Theorem}[section]
\newtheorem{proposition}[theorem]{Proposition}
\newtheorem{lemma}[theorem]{Lemma}
\newtheorem{corollary}[theorem]{Corollary}
\theoremstyle{definition}
\newtheorem{definition}[theorem]{Definition}
\newtheorem{example}[theorem]{Example}
\newtheorem{remark}[theorem]{Remark}
\newtheorem{problem}[theorem]{Problem}
\title[Complete representation for antidomain restriction]{Complete representation by partial functions for signatures containing antidomain restriction}
\author{Brett McLean}
\address{Department of Mathematics: Analysis, Logic and Discrete Mathematics, Ghent University, Ghent, Belgium}
\email{brett.mclean@ugent.be}
\subjclass{03G10, 06E75, 06F05, 20M20.}
\thanks{This work has been suppor\-ted by  SNSF--FWO Lead Agen\-cy Grant 200021L\_196176/\allowbreak G0E2121N and by FWO Postdoctoral Fellowship 1280024N}
\begin{document}

\begin{abstract}
We investigate notions of complete representation by partial functions, where the operations in the signature include antidomain restriction and may include composition, intersection, update, preferential union, domain, antidomain, and set difference. When the signature includes both antidomain restriction and intersection, the join-complete and the meet-complete representations coincide. Otherwise, for the signatures we consider, meet-complete is strictly stronger than join-complete. A necessary condition to be meet-completely representable is that the atoms are separating. For the signatures we consider, this condition is sufficient if and only if composition is not in the signature. For each of the signatures we consider, the class of (meet\mbox{-)}{\allowbreak}completely representable algebras is not axiomatisable by any existential-universal-existential first-order theory. For 14 expressively distinct signatures, we show, by giving an explicit representation, that the (meet\mbox{-)}{\allowbreak}completely representable algebras form a basic elementary class, axiomatisable by a universal-existential-universal first-order sentence. The signatures we axiomatise are those containing antidomain restriction and any of intersection, update, and preferential union and also those containing antidomain restriction, composition, and intersection and any of update, preferential union, domain, and antidomain.

\smallskip\noindent \textit{Keywords:} complete representation; partial function; antidomain restriction; finite first-order axiomatisation.
\end{abstract}

\maketitle

\section{Introduction}

The research program on \emph{algebras of partial functions} studies partial functions \emph{abstractly}. To be more precise: (i) the objects of study are collections of partial functions closed under some natural operations such as composition or intersection, allowing us to view these collections as algebraic structures; and (ii) we study properties of these algebraic structures that are \emph{invariant under isomorphism}.
%Whenever we have a concrete class of algebras whose operations are set-\linebreak theoretically defined, we have a notion of a representation: an isomorphism from an abstract algebra to a concrete algebra. Then the representation class\textemdash the class of representable algebras\textemdash becomes an object of interest itself.

Since functions are one of the most fundamental concepts of the exact sciences, it is not surprising that algebras of partial functions appear in many different areas of mathematics and computer science. In algebra, they arise naturally as inverse semigroups~\cite{wagnergeneralised}, pseudogroups \cite{LAWSON2013117}, and skew lattices~\cite{Leech19967}, and within computing appear in the theory of finite state transducers \cite{10.11452984450.2984453}, computable functions \cite{JACKSON2015259}, deterministic propositional dynamic logics \cite{DBLP:journalsijacJacksonS11}, and separation logic \cite{disjoint}.

The precise set of operations the algebras are equipped with varies between these varied domains of application. However, once the set of operations has been fixed, the most pressing task is to axiomatise the class of all (isomorphs of) algebras of partial functions, if this is indeed possible. Often, these classes have turned out to be finitely axiomatisable varieties or quasivarieties \cite{schein,1018.20057,1182.20058,DBLP:journalsijacJacksonS11}. 

In \cite{10.1093jigpaljzac058}, using a uniform method of representation, Jackson and Stokes provided finite equational or quasiequational axiomatisations of the algebras for around 30 different signatures of operations containing the \emph{domain restriction} operation. Only a handful of these classes of algebras had previously been axiomatised.

Two important stricter conditions we can impose on an isomorphism to an algebra of partial functions, are to require that it be \emph{meet complete} or to require that it be \emph{join complete}. The isomorphism is meet complete if it turns any existing infima into intersections and join complete if it turns any existing suprema into unions. Hence we can define classes of `meet-completely representable'  and of `join-completely representable' algebras. In the field of duality theory, these completely representable algebras have appeared as the appropriate class to use for obtaining \emph{discrete dualities}~\cite{diff-rest2}. Within algebraic logic, completely representable algebras have been studied for other types of representation, such as representation by sets \cite{egrot} or by relations \cite{journalsjsymlHirschH97a,hirsch_2007}. In these contexts, the classes of completely representable algebras are not always first-order axiomatisable.

%In many important cases these two classes coincide. Bounded distributive lattices represented as rings of sets is an example where they do not \cite{egrot}.

%In \cite{journalsjsymlHirschH97a}, Hirsch and Hodkinson showed that when the representation class is first-order axiomatisable, the complete representation class may (as is the case for Boolean algebras represented as fields of sets) or may not (relation algebras by binary relations) also be first-order axiomatisable.

In this paper we investigate complete representation by partial functions for 22 expressively distinct signatures containing the \emph{antidomain restriction} operation. These signatures form a subset of those considered by Jackson and Stokes in \cite{10.1093jigpaljzac058}.

In \Cref{rep} we give basic definitions. In \Cref{rep2} we show that meet-complete representations are always join-complete and determine for which signatures these two notions coincide.

In \Cref{atomicity} we use the notion of an \emph{atomic representation} to begin characterising the classes of (meet-)completely representable algebras. We show that a necessary condition to be meet-completely representable is that the atoms of the algebra can separate any pair of distinct elements. In particular, the algebra must be atomic, which we use to prove that the classes of (meet-)completely representable algebras are not closed under subalgebras, directed unions or homomorphic images and are not axiomatisable by any existential-universal-existential first-order theory.

In \Cref{dist} we investigate the validity of various distributive laws for the classes of representable, join-completely representable, and meet-completely representable algebras in signatures containing both antidomain restriction and \emph{composition}. This enables us to give examples of algebras of these signatures that are representable and whose atoms are separating, but which are not (meet-)completely representable. Thus the atoms being separating is not a sufficient condition for a representable algebra of these signatures to be (meet-)completely representable.

In \Cref{arep} we present an explicit representation, which we use, in \Cref{axiom}, to prove our main results: for 14 expressively distinct signatures, the class of (meet\mbox{-)}completely representable algebras is a basic elementary class, axiomatisable by a universal-existential-universal first-order sentence (\Cref{theorem:no_composition} and \Cref{theorem:composition}). Only two of these complete representation classes had previously been axiomatised \cite{complete,borlido2022difference}.

In \Cref{section:problems}, we conclude by mentioning some open problems.

\section{Algebras of partial functions and their representations}\label{rep}

In this section we give preliminary definitions. Given an algebra $\algebra{A}$, when we write $a \in \algebra{A}$ or say that $a$ is an element of $\algebra{A}$, we mean that $a$ is an element of the domain of $\algebra{A}$. Similarly for the notation $S \subseteq \algebra{A}$ or saying that $S$ is a subset of $\algebra{A}$. We follow the convention that algebras are always nonempty. If $S$ is a subset of the domain of a map $\theta$ then $\theta[S]$ denotes the set $\{\theta(s) \mid s \in S\}$. If $S_1$ and $S_2$ are subsets of the domain of a binary operation $*$ then $S_1 * S_2$ denotes the set $\{s_1 * s_2 \mid s_1 \in S_1 \text{ and } s_2 \in S_2\}$. In a poset $\algebra P$ (whose identity should be clear) the notation $\down a$ signifies the down set $\{b \in \algebra P \mid b \leq a\}$.

We begin by making precise what is meant by partial functions and
algebras of partial functions. 
\begin{definition}
  Let $X$ and $Y$ be sets. A \defn{partial function} from $X$ to $Y$
  is a subset $f$ of $X \times Y$ validating
\begin{equation*}
  (x, y) \in f \text{ and } (x, z) \in f \implies y = z.
\end{equation*}
If $X = Y$ then $f$ is called simply a partial function on $X$.  Given
a partial function $f$ from $X$ to $Y$, its \defn{domain} is the set
\[\dom(f) \coloneqq \{x \in X \mid \exists \ y \in Y \colon (x, y) \in f\}.\]
The \defn{range} of $f$ is the set \[\ran(f) \coloneqq \{y \in Y \mid \exists \ x \in X \colon (x, y) \in f\}.\]
\end{definition}

\begin{definition}\label{first}
Let $\sigma$ be an algebraic signature whose symbols are a subset of $\{\arest, \compo, \intersect, \updatesymb, \pref, \D, \A\}$. An \defn{algebra of partial functions} of the signature $\sigma$ is an algebra of the signature $\sigma$ whose elements are partial functions and with operations given by the set-theoretic operations on those partial functions described in the following.

Let $X$ be the union of the domains and ranges of all the partial functions. We call $X$ the \defn{base}. The operations are defined as follows.
\begin{itemize}
\item
The binary operation $\arest$ is \defn{antidomain restriction}. It is the restriction of the second argument to elements \emph{not} in the domain of the first; that is:
\[ f \arest g \coloneqq \{(x, y) \in X^2 \mid x \not\in \dom(f)
  \text{ and } (x, y) \in g\}\text{.}\]
\item
The binary operation $\compo$ is \defn{composition} of partial functions:
\[f \compo g = \{(x,z) \in X^2 \mid \exists y \in X  (x, y) \in f\text{ and }(y, z) \in g\}\text{.}\]
\item
The binary operation $\wedge$ is \defn{intersection}:
\[f \intersect g = \{(x,y) \in X^2 \mid (x, y) \in f\text{ and }(x, y) \in g\}\text{.}\]
\item
The binary operation $\updatesymb$ is \defn{update}:
\[ \update f g(x) =
\begin{cases}
f(x) & \text{if }f(x)\text{ defined but }g(x)\text{ undefined}\\
g(x) & \text{if }f(x)\text{ defined and }g(x)\text{ defined}\\
\text{undefined} & \text{otherwise.}
\end{cases}\]
\item
The binary operation $\pref$ is \defn{preferential union}\footnote{This operation is also known as \emph{override}.}:
\[(f \pref g)(x) =
\begin{cases}
f(x) & \text{if }f(x)\text{ defined}\\
g(x) & \text{if }f(x)\text{ undefined, but }g(x)\text{ defined}\\
\text{undefined} & \text{otherwise.}
\end{cases}\]
\item
The unary \defn{domain} operation $\D$ is the operation of taking the diagonal of the domain of a function:
\[\D(f) = \{(x, x) \in X^2 \mid x \in \dom(f)\}\text{.}\]
%\item
%The unary operation $\R$ is the operation of taking the diagonal of the range of a function:
%\[\R(f) = \{(y, y) \in X^2 \mid \exists x \in X \colon (x, y) \in f\}\text{.}\]
\item
The unary \defn{antidomain} operation $\A$ is the operation of taking the diagonal of the antidomain of a function\textemdash those elements of $X$ where the function is not defined:
\[\A(f) = \{(x, x) \in X^2 \mid x \in X \setminus \dom(f)\}\text{.}\]
%\item
%The unary operation $\F$ is \defn{fixset}, the operation of taking the diagonal of the fixed points of a function:
%\[\F(f) = \{(x, x) \in f\}\text{.}\]
\end{itemize}
\end{definition}

In this paper we only consider signatures that contain antidomain restriction. Note that with antidomain restriction present, two further commonly considered operations on partial functions are term definable. 
\begin{itemize}
\item
The constant $0$ is the nowhere-defined \defn{zero} function:
\[0 = \varnothing\text{.}\]
We can define $0$ with the term $f \arest f$.
\item
The binary operation $\rest$ is \defn{domain restriction}.\begin{NoHyper}\footnote{This operation is also known as \emph{restrictive multiplication}.}\end{NoHyper} It is the restriction of the second argument to the domain of the first; that is:
\[ f \rest g \coloneqq \{(x, y) \in X^2 \mid x \in \dom(f)
  \text{ and } (x, y) \in g\}\text{.}\]
  We can define $f \rest g$ with the term $(f \arest g) \arest g$.
\end{itemize}
Additionally, given that antidomain restriction is in the signature, the presence of the following are equivalent, respectively, to the presence of antidomain and the presence of intersection; hence we do not need to consider them independently.
\begin{itemize}
\item
The constant $\id$ is the \defn{identity} function on $X$:
\[\id = \{(x, x) \in X^2 \}\text{.}\]
We have $\id = \A(0)$ and $\A(f) = f \arest \id$.
\item
The binary operation $\setminus$ is \defn{relative complement}:
\[f \setminus g = \{(x,y) \in X^2 \mid (x, y) \in f\text{ and }(x, y) \not\in g\}\text{.}\]
We have $f \setminus g = (f \intersect g) \arest f$ and $f \intersect g = f \setminus (f \setminus g)$.
\end{itemize}

The list of operations we have given does not exhaust those that have been considered for partial functions, but does include many of the most commonly appearing operations. Notable exceptions are \emph{range} and related operations, such as \emph{range restriction} and \emph{antirange}.

\begin{definition}
Let $\algebra{A}$ be an algebra of one of the signatures specified by \Cref{first}. A \defn{representation of $\algebra{A}$ by partial functions} is an isomorphism from $\algebra{A}$ to an algebra of partial functions of the same signature. If $\algebra{A}$ has a representation then we say it is \defn{representable}.
\end{definition}

The following theorem is stated for precisely the signatures we investigate in this paper.

\begin{theorem}[Jackson and Stokes \cite{10.1093jigpaljzac058}]\label{thm:jackson-stokes}
Let $\{\arest\} \subseteq \sigma \subseteq \{\arest, \intersect, \updatesymb, \pref\}$ or $\{\arest, \compo\} \subseteq \sigma \subseteq \{\arest, \compo, \intersect, \updatesymb, \pref, \D, \A\}$. Then the class of $\sigma$-algebras representable by partial functions is a finitely based variety or a finitely based quasivariety.
\end{theorem}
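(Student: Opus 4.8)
The plan is to prove the theorem in the shape standard for representation results: exhibit, for each admissible $\sigma$, a finite set $\Sigma_\sigma$ of (quasi)equations, show that every algebra of partial functions of signature $\sigma$ satisfies $\Sigma_\sigma$ (\emph{soundness}), and show conversely that every $\sigma$-algebra satisfying $\Sigma_\sigma$ is representable (\emph{completeness}). The representable class then coincides with the models of $\Sigma_\sigma$, so it is a finitely based quasivariety; it is moreover a variety exactly when $\Sigma_\sigma$ can be taken purely equational. I would expect the equational choice to be available precisely when $\intersect$ (equivalently $\setminus$) lies in $\sigma$: then the restriction idempotents --- the elements equal to the domain of something, on which $\A$ and $\intersect$ act as a Boolean complement and meet --- form a Boolean algebra whose laws are equational, whereas without $\intersect$ the override-type operations $\pref$ and $\updatesymb$ appear to require genuine implications, yielding only a quasivariety.

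Soundness is a routine, base-set-independent check: for each symbol one verifies directly from \Cref{first} that its set-theoretic definition validates the corresponding law. The content is completeness, for which I would use a point-based (filter) construction. Fix a model $\algebra{A}$ of $\Sigma_\sigma$ and recall that the inclusion order is available even in the smallest signatures, since $f \leq g$ is expressible as $f \rest g = f$ and $\rest$ is term-definable from $\arest$. Take as base set $X$ a suitable family of filters of $\algebra{A}$ --- ultrafilters of the Boolean algebra of restriction idempotents when $\intersect$ and $\A$ are present, and prime filters of the corresponding distributive structure otherwise --- each point packaging the behaviour of the functions at a single base element. Represent $a \in \algebra{A}$ by the partial function $\theta(a)$ that is defined at a point $p$ exactly when a term expressing ``$a$ is defined throughout the region $p$'' lies in $p$, and, when $\compo \in \sigma$, transports $p$ to the point obtained by pushing it through $a$; when $\compo \notin \sigma$ the image is immaterial and may be sent to a single fixed sink, since then only the domains of the functions and their mutual agreements matter.

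I would then verify that $\theta$ is a homomorphism one operation at a time. The $\arest$ and $\A$ clauses reduce to the complementation behaviour of antidomain on the (Boolean or distributive) idempotent structure; the $\intersect$ clause to meets of idempotents; and the $\pref$ and $\updatesymb$ clauses to a case split according to whether a point lies in the domain region of the first argument. These verifications are where the precise form of $\Sigma_\sigma$ is consumed, and where the non-equational axioms (in the $\intersect$-free signatures) are needed to make $\pref$ and $\updatesymb$ behave.

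The crux is faithfulness: given $a \neq b$ I must produce a point at which $\theta(a)$ and $\theta(b)$ disagree --- either because exactly one is defined there, or, when $\compo \in \sigma$, because both are defined but transport the point to different images. This is a separation statement proved by extending a finitely consistent set of idempotents to an ultrafilter or prime filter via Zorn's lemma, with the axioms $\Sigma_\sigma$ guaranteeing that the separating idempotent, and a further idempotent distinguishing the two images, exist. I expect the genuine obstacles to be threefold: showing that each $\theta(a)$ is single-valued (a partial function, not merely a relation) while retaining enough points for faithfulness in the composition case; verifying the transport identity $\theta(a \compo b) = \theta(a) \compo \theta(b)$; and, for the minimal signatures in which $\D$ and $\A$ are not even term-definable, encoding ``definedness on a region'' and ``agreement of values'' using only the operations actually present, so that a single construction works uniformly across all the listed signatures.
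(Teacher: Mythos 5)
This theorem is not proved in the paper at all: it is imported from Jackson and Stokes \cite{10.1093jigpaljzac058} (the paper's closest internal relative is the Cayley-style representation of \Cref{prop:rep}, which adapts their uniform method to atoms), so there is no internal proof to compare you against, and your attempt must be judged on its own merits. Judged so, it contains a step that demonstrably fails. In the composition-free signatures you propose to send every point in the domain of $\theta(a)$ to ``a single fixed sink, since then only the domains of the functions and their mutual agreements matter.'' That is false, and it destroys injectivity. Consider the algebra of partial functions on the base $\{p, q_1, q_2\}$ whose elements are $0$, $f = \{(p, q_1)\}$, and $g = \{(p, q_2)\}$: this set is closed under all of $\arest$, $\intersect$, $\updatesymb$, $\pref$ (every result of an operation is again $0$, $f$, or $g$), so it is a representable $\sigma$-algebra for every composition-free $\sigma$ in the statement. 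Here $f \neq g$ but $f \sim g$ (indeed $f \arest g = g \arest f = 0$ forces $\dom \theta(f) = \dom \theta(g)$ in \emph{any} representation), so every term-definable ``definedness region'' is the same for $f$ and $g$; your $\theta$ therefore gives them identical domains, and with a constant sink identical values, whence $\theta(f) = \theta(g)$. Any correct construction must let the image remember the element, not just its domain: this is precisely why \Cref{prop:rep} adjoins the second sort $\At(\algebra A)/{\sim}$ and takes the value there to be $x \rest a$, which separates the pair above, since $f \rest f = f \neq g = f \rest g$.

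Beyond this, your text is a programme rather than a proof: $\Sigma_\sigma$ is never exhibited, and---this being the entire content of the theorem---no reason is given why \emph{finitely many} quasiequations suffice. A Zorn-style filter-extension argument of the kind you sketch consumes whatever separation principles it needs, and without the axioms in hand it yields at best axiomatisability, not a finite basis; the pieces you yourself flag as ``genuine obstacles'' (single-valuedness of $\theta(a)$, the transport identity for $\compo$, encoding agreement of values in the minimal signatures) are exactly where that work lives. Two of your diagnoses are also off target. Since $\arest \in \sigma$ throughout, every down-set $\down a$ is a Boolean algebra even when $\intersect \notin \sigma$ (\Cref{lemma:boolean}), so your ultrafilter-versus-prime-filter dichotomy is misplaced: the difficulty in the $\intersect$-free case is the absence of global meets and of a top element, not a failure of distributivity. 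And your conjecture that an equational basis is available exactly when $\intersect \in \sigma$ is both unsupported and unnecessary: a finite equational basis is in particular a finite quasiequational one, so the disjunction in the statement requires no such dichotomy, only a finite quasiequational basis in every case.
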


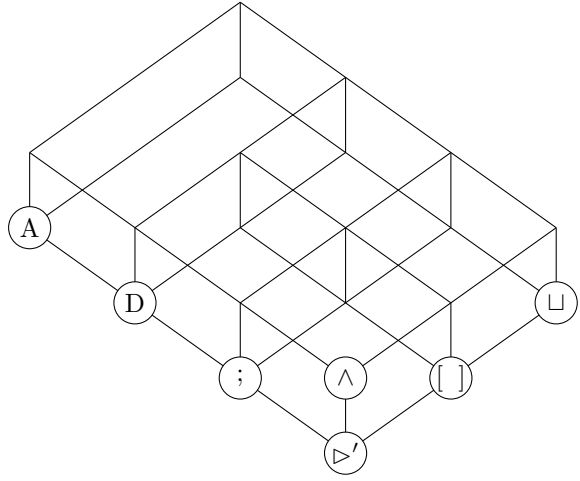
\begin{figure}[!htbp]
\centering
\begin{tikzpicture}[xscale=1.4]
\draw (0,3)--(3,0);
\draw (0,4)--(3,1);
\draw (2,3)--(4,1);
\draw (2,4)--(4,2);
\draw (2,5)--(5,2);
\draw (2,6)--(5,3);
\foreach \i in {0,1,2,3}
{
\draw (3-\i,0+\i)--(5-\i,2+\i);
\draw (3-\i,1+\i)--(5-\i,3+\i);

\draw (3-\i,0+\i)--(3-\i,1+\i);
\draw (5-\i,2+\i)--(5-\i,3+\i);
}
\draw (4,1)--(4,2);
\draw (3,2)--(3,3);
\draw (2,3)--(2,4);
\filldraw[color=black, fill=white](3,1) ellipse (.2 and .28);
\node  at (3,1) {$\intersect$};
\filldraw[color=black, fill=white](3,0) ellipse (.2 and .28);
\node at (3,0) {$\arest$};
\filldraw[color=black, fill=white](4,1) ellipse (.2 and .28);
\node  at (4,1) {$\updatesymb$};
\filldraw[color=black, fill=white](5,2) ellipse (.2 and .28);
\node  at (5,2) {$\pref$};
\filldraw[color=black, fill=white](2,1) ellipse (.2 and .28);
\node  at (2,1) {$\compo$};
\filldraw[color=black, fill=white](1,2) ellipse (.2 and .28);
\node  at (1,2) {$\D$};
\filldraw[color=black, fill=white](0,3) ellipse (.2 and .28);
\node  at (0,3) {$\A$};
\end{tikzpicture}
\caption{Hasse diagram of the signatures investigated and their relative expressiveness. Each vertex represents the signature containing the operations appearing below the vertex.}\label{figure:signatures}
\end{figure}

The signatures covered by \Cref{thm:jackson-stokes} are depicted in \Cref{figure:signatures}. The diagram uses the same format as \cite[Figure~1]{10.1093jigpaljzac058} (which depicts signatures $\{\rest\} \subseteq \sigma \subseteq \{\rest,\arest, \intersect, \updatesymb, \pref\}$).

 Note that $\D$ is definable from $\A$, as $\D = \A^2$, and thus $\D$ and $\A$ do not require independent dimensions in the diagram. Similarly, $\updatesymb$ is term definable in signatures containing $\arest$ and $\pref$, as $\update f g = f \rest (g \pref f)$ (using the definition of $\rest$ in terms of $\arest$); thus $\updatesymb$ and $\pref$ do not require independent dimensions. For the signatures containing $\compo$ and $\A$ (in which $\arest$ is definable), the converse is true: we can give a term definition of $\pref$ if we have $\updatesymb$. A definition is $f \pref g =\update{\update {\A(\A(f) \compo \A(g))} g} f$. Thus there are only four signatures containing $\compo$ and $\A$ to consider: $\{\compo, \A\}$, $\{\compo, \A, \intersect\}$, $\{\compo, \A, \pref\}$, and $\{\compo, \A, \pref, \intersect\}$. For these four signatures, finite equational/quasiequational axiomatisations were first presented in \cite{DBLP:journalsijacJacksonS11}. Of the other signatures, $\{\arest, \pref\}$ had previously been axiomatised in \cite{leech1990skew}, $\{\arest,\intersect, \pref\}$ (in the guise of $\{\setminus, \pref\}$) had previously been axiomatised in \cite{cirulis2011nearlattices}, and $\{\arest, \intersect\}$ (in the guise of $\{\rest, \setminus\}$) had previously been axiomatised \cite{borlido2022difference}.
 
We now note that no further relations between expressiveness of signatures hold than shown in \Cref{figure:signatures}. That all the vertices for signatures including $\{\arest, \compo, \intersect, \updatesymb, \pref\}$ are distinct is shown in \cite{10.1093jigpaljzac058}. Essentially the same arguments can be used to show that the six signatures containing $\D$ but not $\A$ are distinct from one another. It is easy to verify that none of the signatures including $\{\arest, \compo, \intersect, \updatesymb, \pref\}$ can express $\D$, and hence these two groups are disjoint from one another. That the four signatures containing $\A$ are distinct from one another is noted in \cite{DBLP:journalsijacJacksonS11}. It only remains to argue that this group of four is distinct from the remainder of the signatures. For that it suffices to exhibit a collection of partial functions closed under $\arest$, $\compo$, $\intersect$, $\updatesymb$, $\pref$, and $\D$ but not closed under $\A$. Choose any infinite set $X$ as the base. Then the set of all identity functions whose domains are finite subsets of $X$ is such a collection.
%In fact in \cite{DBLP:journalsijacJacksonS11} a finite equational axiomatisation of the representation class is given, implicitly. So there exist known examples of such axiomatisations.

%If an algebra of the signature $(\compo, \intersect, \A)$ is representable by partial functions, then it forms a $\intersect$-semilattice. Whenever we treat such an algebra as a poset, we are using the order induced by this semilattice.
\section{Complete representations}\label{rep2}

In this section, we give the definitions of the central notions in this paper: join-complete representations and meet-complete representations. We then begin to analyse these notions by determining the relationship between them.

 If an algebra of a signature containing $\arest$ is representable by partial functions, then it forms a poset when equipped with the relation $\leq$ defined by 
\begin{equation}\label{define_order}a \leq b \iff a \rest b = a.\end{equation}
This relation corresponds to $\subseteq$ in the sense that for \emph{any} representation $\theta$, we have \begin{equation}\label{represent-poset}a \leq b \iff \theta(a) \subseteq \theta(b).\end{equation}
When we treat an algebra representable by partial functions as a poset, we always mean the poset with partial order defined by equation \eqref{define_order}. Note that the constant $0$ (defined by the term $a \arest a$) is always the least element with respect to this order.

The next two definitions apply to any situation where the concept of a representation has been defined and such that the representable algebras can be viewed as posets.\footnote{However, the definitions are only interesting when the partial order and notion of representation are defined in such a way that equation \eqref{represent-poset} is necessarily valid. Hence we may take equation \eqref{represent-poset} as the \emph{definition} of a representation of a poset.} So in particular, these definitions apply to representations as fields of sets as well as to representations by partial functions.

\begin{definition}\label{def:join}
A representation $\theta$ of a poset $\algebra{P}$ over the base $X$ is \defn{join complete} if, for every subset $S$ of $\algebra{P}$, if $\join S$ exists, then
\[\theta(\join S) = \bigcup \theta[S].\]
\end{definition}

\begin{definition}\label{def:meet}
A representation $\theta$ of a poset $\algebra{P}$ over the base $X$ is \defn{meet complete} if, for every nonempty subset $S$ of $\algebra{P}$, if $\meet S$ exists, then
\[\theta(\meet S) = \bigcap \theta[S]\text{.}\]
\end{definition}

Note that $S$ is required to be nonempty in \Cref{def:meet}, but not in \Cref{def:join}. For representations of Boolean algebras as fields of sets, the notions of meet complete and join complete are equivalent, so in this case we may simply use the adjective \defn{complete}.

%Note that if $\algebra{A}$ is an algebra of the signature $(\compo, \intersect, \A)$ and $\algebra{A}$ is representable by partial functions, then $\algebra{A}$ must have a least element, $0$, given by $\A(a) \compo a$ for any $a \in \algebra{A}$ and any representation must represent $0$ with the empty set. Similarly $\D \coloneqq \A^2$ must be represented by the set-theoretic domain operation.

The following lemma demonstrates the utility of signatures $\sigma$ containing $\arest$. The similarity of representable $\sigma$-algebras to Boolean algebras allows results from the theory of Boolean algebras to be imported into the setting of $\sigma$-algebras.

\begin{lemma}\label{lemma:boolean}
Let $\sigma$ be a signature containing $\arest$ and let $\algebra{A}$ be a $\sigma$-algebra. If $\algebra{A}$ is representable by partial functions, then for every $a \in \algebra{A}$, the set $\down a$, with least element $0$, greatest element $a$, meet given by $\rest$ and complementation given by $\compl{b} \coloneqq b \arest a$ is a Boolean algebra. Any representation $\theta$ of $\algebra{A}$ by partial functions restricts to a representation of $\down a$ as a field of sets over $\theta(a)$. The representation of $\down a$ is complete if either
\begin{enumerate}
\item\label{item:meet}
$\theta$ is a meet-complete representation;
\item\label{item:join}
$\sigma$ contains $\intersect$, and $\theta$ is a join-complete representation.
\end{enumerate}
\end{lemma}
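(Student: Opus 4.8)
The plan is to reduce the whole lemma to two pointwise identities that hold under an arbitrary representation $\theta$, exploiting that $\theta(a)$ is itself a partial function and that $b \leq a$ forces $\theta(b) \subseteq \theta(a)$ by \eqref{represent-poset}. The key observation is that if $b, c \leq a$, then $\theta(b)$ and $\theta(c)$ are both subfunctions of the single partial function $\theta(a)$, so they agree wherever both are defined. From this I would deduce $\theta(b \rest c) = \theta(b) \cap \theta(c)$ and $\theta(b \arest a) = \theta(a) \setminus \theta(b)$, the latter being the set-theoretic complement of $\theta(b)$ relative to $\theta(a)$. Each is a short verification of two inclusions, the nontrivial direction using that $\theta(a)$ is single-valued. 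These two identities drive everything that follows.

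From them the Boolean part falls out quickly. The identities show that $\theta[\down a]$ is closed under binary intersection and under complementation relative to $\theta(a)$, and it contains $\theta(0) = \varnothing$ and $\theta(a)$; hence $\theta[\down a]$ is a field of subsets of $\theta(a)$, in particular a Boolean algebra. Since $\theta$ is injective, $\theta \restriction \down a$ is a bijection onto $\theta[\down a]$ carrying $0$, $a$, $\rest$, and $\compl{\,\cdot\,}$ to $\varnothing$, $\theta(a)$, $\cap$, and relative complement respectively (closure of $\down a$ under $\rest$ and $\compl{\,\cdot\,}$ being read off from $\theta(b \rest c), \theta(b \arest a) \subseteq \theta(a)$ via \eqref{represent-poset}). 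Transporting the Boolean structure back along this bijection shows that $\down a$ with the stated operations is a Boolean algebra and that $\theta \restriction \down a$ is a representation of it as a field of sets over $\theta(a)$. As this argument uses nothing about $\theta$ beyond being a representation, both the structural claim and the ``any representation restricts'' claim are obtained simultaneously.

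For completeness I would first record that, for a field-of-sets representation of a Boolean algebra, meet completeness and join completeness are equivalent (as already noted), so it suffices to verify one of them in each case. The pivotal fact is that suprema and infima computed in $\down a$ agree with those computed in $\algebra{A}$. For infima of a nonempty $S \subseteq \down a$ this is immediate: every lower bound of $S$ in $\algebra{A}$ lies below some element of $S$, hence below $a$, so the lower bounds in $\algebra{A}$ and in $\down a$ coincide and $\meet S$ is the same in both. Thus in case~(\ref{item:meet}) a meet-complete $\theta$ restricts directly to a meet-complete, hence complete, representation of $\down a$. The main obstacle will be the supremum transfer needed for case~(\ref{item:join}): one always has $\join_{\algebra{A}} S \leq a$ when it exists, but a priori $\join_{\down a} S$ could exist without $\join_{\algebra{A}} S$ existing. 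This is precisely where $\intersect$ enters. Given any upper bound $c$ of $S$ in $\algebra{A}$, the element $c \intersect a$ lies in $\down a$ and, since every $s \in S$ satisfies $\theta(s) \subseteq \theta(c) \cap \theta(a) = \theta(c \intersect a)$, is an upper bound of $S$ in $\down a$; hence $\join_{\down a} S \leq c \intersect a \leq c$, forcing $\join_{\down a} S = \join_{\algebra{A}} S$. A join-complete $\theta$ then restricts to a join-complete, hence complete, representation of $\down a$. Without $\intersect$ there is no way to push a global upper bound below $a$, which explains why the hypothesis is required in case~(\ref{item:join}) but not in case~(\ref{item:meet}).
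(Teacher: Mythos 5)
Your proposal is correct and follows essentially the same route as the paper's proof: you verify the two identities $\theta(b \rest c) = \theta(b) \cap \theta(c)$ and $\theta(b \arest a) = \theta(a) \setminus \theta(b)$ for $b, c \in \down a$ (using single-valuedness of $\theta(a)$, which the paper invokes implicitly via $\theta(b), \theta(c) \subseteq \theta(a)$), transport the field-of-sets structure back along the injective map $\theta$, and establish the two completeness cases by exactly the paper's transfer arguments—lower bounds of a nonempty $S \subseteq \down a$ automatically lie in $\down a$ for case~(\ref{item:meet}), and $c \intersect a$ pushes an arbitrary upper bound into $\down a$ for case~(\ref{item:join}). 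The only cosmetic difference is that you make explicit the appeal to the equivalence of meet- and join-completeness for Boolean field-of-sets representations, which the paper states just before the lemma and uses tacitly.
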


\begin{proof}
If $\theta$ is a representation of $\algebra{A}$ by partial functions, then $b \leq a \iff \theta(b) \subseteq \theta(a)$, so $\theta$ does indeed map elements of $\down a$ to subsets of $\theta(a)$.

 Since $\algebra A$ isomorphic to an algebra of partial functions, we can reason using properties of partial functions to deduce that $b, c \in \down a \implies b \rest c \in \down a$. Then to see that $\rest$ is represented as intersection on $\down a$, first note that $\theta(b \rest c) = \theta(b \rest (c \rest a)) = \{(x,y) \in \theta(a) \mid x \in \dom(b) \cap \dom(c)\}$, by the definition of functional representability. Then since both $\theta(b)$ and $\theta(c)$ are subset of $\theta(a)$, we have $\{(x,y) \in \theta(a) \mid x \in \dom(b) \cap \dom(c)\}= \theta(b) \cap \theta(c)$.

 For $b \leq a$
\[\theta(\compl{b}) = \theta(b \arest a) = \theta(b) \arest \theta(a) =\{(x,y) \in \theta(a) \mid x \in \dom(b)\} = \theta (a) \setminus \theta (b),\]
since $\theta(b) \subseteq \theta(a)$. So $\compl{b} \in \down a$ (as $\theta$ is an isomorphism) and $\theta(\compl{b}) = \stcomp{\theta(b)}$, where the set complement is taken relative to $\theta(a)$. 

We have verified that the restriction of $\theta$ to $\down a$ is a representation of $(\down a, 0, a, \rest, \compl{\phantom{c}})$ as a field of sets over $\theta(a)$. It follows that $(\down a, 0, a, \rest, \compl{\phantom{c}})$ is a Boolean algebra.

Suppose $\theta$ is meet complete. If $S$ is a nonempty subset of $\down a$, then all lower bounds for $S$ in $\algebra{A}$ are also in $\down a$. Hence if $\meet_{\down a} S$ exists then it equals $\meet_{\algebra{A}} S$, and so $\theta(\meet_{\down a} S) = \bigcap \theta[S]$. So the representation of $\down a$ is complete.

Suppose instead that $\sigma$ contains $\intersect$, and $\theta$ is join complete. Suppose $S \subseteq \down a$ and $\join_{\down a} S$ exists. If $c \in \algebra{A}$ and $c$ is an upper bound for $S$, then $c \geq c \intersect a \geq \join_{\down a} S$. Hence $\join_{\down a} S = \join_{\algebra{A}} S$, giving $\theta(\join_{\down a} S) = \theta(\join_{\algebra{A}} S) = \bigcup \theta[S]$. So the representation of $\down a$ is complete.
\end{proof}

Let us give a name to the completeness property identified in \Cref{lemma:boolean}.

\begin{definition}
Let $\sigma$ be a signature containing $\arest$, let $\algebra{A}$ be a $\sigma$-algebra, and let $\theta$ be a representation of $\algebra A$ by partial functions. Then $\theta$ is \defn{locally complete} if for all $a \in \algebra A$, the map $\theta$ restricts to a complete representation of the Boolean algebra $\down a$.
\end{definition}

\begin{lemma}\label{lemma:local_join}
Let $\sigma$ be a signature containing $\arest$. Let $\algebra{A}$ be a $\sigma$-algebra and $\theta$ be a representation of $\algebra{A}$ by partial functions. If $\theta$ is locally complete, then it is join complete.
\end{lemma}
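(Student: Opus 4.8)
The plan is to reduce join completeness of $\theta$ on all of $\algebra{A}$ to the completeness of $\theta$ on the Boolean algebras $\down a$ guaranteed by local completeness. The key observation is that suprema are computed \emph{locally}: whenever a supremum exists in $\algebra{A}$, it already exists, and coincides with it, inside the down-set of that supremum.

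Concretely, I would take an arbitrary subset $S \subseteq \algebra{A}$ for which $\join S$ exists, and set $a \coloneqq \join S$. Since $a$ is an upper bound for $S$, every element of $S$ lies in $\down a$, so $S \subseteq \down a$. I would then establish the crucial claim that $\join_{\down a} S$ exists and equals $a$. Indeed, $a$ is an upper bound for $S$ inside $\down a$; and any other upper bound $b \in \down a$ for $S$ is also an upper bound for $S$ in $\algebra{A}$, whence $a \leq b$ by minimality of $a = \join_{\algebra{A}} S$. Thus $a$ is the least upper bound of $S$ in $\down a$.

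With this in hand, local completeness provides that the restriction of $\theta$ to $\down a$ is a complete representation of the Boolean algebra $\down a$; since for fields of sets meet completeness and join completeness coincide, this restriction is in particular join complete. Applying join completeness inside $\down a$ to the set $S$ then yields
\[
\theta(\join S) = \theta(a) = \theta\bigl(\textstyle\join_{\down a} S\bigr) = \bigcup \theta[S],
\]
which is precisely the identity required by \Cref{def:join}.

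The argument is uniform in $S$ and in particular covers the empty set: there $\join \varnothing = 0$, so $a = 0$, the down-set $\down 0$ is the trivial Boolean algebra, and $\theta(0) = \varnothing = \bigcup \theta[\varnothing]$. The only step demanding genuine care is the \emph{local absoluteness} of suprema isolated in the second paragraph; the remainder is a direct unwinding of the definitions, so I expect that verification — rather than any substantive difficulty — to be the heart of the proof.
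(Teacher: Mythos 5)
Your proof is correct and takes essentially the same route as the paper: set $a = \join_{\algebra{A}} S$, observe that this supremum is also the supremum of $S$ computed inside $\down a$, and then invoke the (join) completeness of the restricted representation of the Boolean algebra $\down a$ guaranteed by local completeness. The ``local absoluteness'' claim you carefully verify is exactly the step the paper compresses into the single line ``$\join_{\algebra{A}} S \in \down a$, so $\join_{\algebra{A}} S = \join_{\down a} S$'', and your explicit treatment of $S = \varnothing$ is a harmless extra check.
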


\begin{proof}
Suppose that $\theta$ is locally complete. Let $S$ be a subset of $\algebra{A}$ and suppose that $\join_{\algebra{A}} S$ exists. Let $a = \join_{\algebra{A}} S$. Then $\join_{\algebra{A}} S \in \down a$, so $\join_{\algebra{A}} S = \join_{\down a} S$. Then
\[\theta(\join_{\algebra{A}} S) = \theta(\join_{\down a} S) = \bigcup \theta[S]\text{.}\qedhere\]
\end{proof}

Thus we have \emph{meet complete} $\implies$ \emph{locally complete} $\implies$ \emph{join complete}. When $\intersect$ is also in the signature, then all three notions are equal.

%\begin{corollary}\label{cor:1}
%Let $\sigma$ be a signature containing $\arest$. Let $\algebra{A}$ be a $\sigma$-algebra and $\theta$ be a representation of $\algebra{A}$ by partial functions. If $\theta$ is meet complete, then it is join complete.
%\end{corollary}

%\begin{proof}
%Suppose that $\theta$ is meet complete. Let $S$ be a subset of $\algebra{A}$ and suppose that $\join_{\algebra{A}} S$ exists. Let $a = \join_{\algebra{A}} S$. Then $\join_{\algebra{A}} S \in \down a$, so $\join_{\algebra{A}} S = \join_{\down a} S$. Then
%\[\theta(\join_{\algebra{A}} S) = \theta(\join_{\down a} S) = \bigcup \theta[S]\text{.}\qedhere\]
%\end{proof}

\begin{corollary}[of \Cref{lemma:boolean}\eqref{item:join}]
Let $\sigma$ be a signature including $\{\arest, \intersect\}$. Let $\algebra{A}$ be a $\sigma$-algebra and $\theta$ be a representation of $\algebra{A}$ by partial functions. If $\theta$ is join complete, then it is meet complete.
\end{corollary}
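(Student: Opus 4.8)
The plan is to deduce this corollary from the chain of implications established just before it, namely \emph{meet complete} $\implies$ \emph{locally complete} $\implies$ \emph{join complete}, by showing that when $\{\arest, \intersect\} \subseteq \sigma$ the reverse implication \emph{join complete} $\implies$ \emph{meet complete} also holds. The key observation is that the two outer notions collapse, so it suffices to climb back up the chain: I would first promote a join-complete representation to a locally complete one, and then promote local completeness to meet completeness.

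For the first promotion, suppose $\theta$ is join complete. Since $\sigma$ contains $\intersect$, I can invoke \Cref{lemma:boolean}\eqref{item:join}, which says precisely that under these hypotheses the restriction of $\theta$ to each $\down a$ is a \emph{complete} representation of the Boolean algebra $\down a$. This is exactly the definition of $\theta$ being locally complete, so the first step is essentially a direct appeal to the lemma already proved.

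For the second promotion, I would show that local completeness forces meet completeness whenever $\intersect$ is present. Let $S$ be a nonempty subset of $\algebra{A}$ for which $\meet_{\algebra{A}} S$ exists, and fix any $s_0 \in S$. Because meets are bounded above by every element of $S$, the infimum and all elements of $S$ that matter for the computation lie below $s_0$; more precisely, replacing each $s \in S$ by $s \intersect s_0$ leaves the intersection $\bigcap \theta[S]$ and the infimum unchanged while moving everything into the Boolean algebra $\down s_0$. Within the complete field-of-sets representation of $\down s_0$ guaranteed by local completeness, meets are represented as set intersections, which yields $\theta(\meet_{\algebra{A}} S) = \bigcap \theta[S]$.

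The main obstacle I anticipate is the bookkeeping in the second promotion: I must check that intersecting with $s_0$ genuinely preserves the relevant infimum (so that $\meet$ computed in $\down s_0$ agrees with $\meet$ computed in $\algebra{A}$) and that $\intersect$ is represented as $\cap$ once both arguments sit below $s_0$ — the latter being the content of the intersection-equals-$\rest$ computation inside \Cref{lemma:boolean}. Apart from this localisation step, the argument is a short composition of results already in hand, and I would keep the proof to a couple of lines citing \Cref{lemma:boolean}\eqref{item:join} and the meet-completeness half of local completeness.
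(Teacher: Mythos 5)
Your proposal is correct and follows essentially the same route as the paper: the paper's proof is exactly your localisation step, replacing $S$ by $S \intersect \{s\}$ for a fixed $s \in S$ (which changes neither the meet nor $\bigcap\theta[S]$) and then applying the complete representation of $\down s$ supplied by \Cref{lemma:boolean}\eqref{item:join}. The only cosmetic difference is that the paper compresses your two ``promotions'' into a single chain of equalities rather than passing explicitly through local completeness.
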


\begin{proof}
Suppose that $\theta$ is join complete. Let $S$ be a nonempty subset of $\algebra{A}$ and suppose that $\meet_{\algebra{A}} S$ exists. As $S$ is nonempty, we can find $s \in S$. Then
\[\theta(\meet_{\algebra{A}} S) = \theta(\meet_{\algebra{A}} (S \intersect \{s\})) = \theta(\meet_{\down{s}} (S \intersect \{s\})) = \bigcap \theta[S \intersect \{s\}] = \bigcap \theta[S]\text{.}\qedhere\]
\end{proof}

These results tell us that, just as for representations of Boolean algebras, when a signature $\sigma$ includes $\{\arest, \intersect\}$, we can describe representations of $\sigma$-algebras by partial functions as \defn{complete}, without any risk of confusion about whether we mean meet complete or join complete.\footnote{Since $\{\arest, \intersect\}$ is equivalent to $\{\rest, {\setminus}\}$, this observation is not new, having already been made in \cite{borlido2022difference}.}

For our signatures that omit $\intersect$ (that is, for the lower layer in \Cref{figure:signatures}), the three notions of completeness of a representation are distinct.  The following example shows that for these signatures, join-complete representations do not necessarily restrict to complete representations of the down-set Boolean algebras. %By \Cref{lemma:boolean}, this tells us in particular that for these signatures, meet-complete representation is a strictly stronger notion than join-complete representation, although this fact can be demonstrated with much simpler algebras containing only two nonzero elements.

\begin{example}[join complete $\centernot\implies$ locally complete]\label{example1}
Consider the following concrete algebra of partial functions, $\algebra{F}$. (We will clarify the signature shortly.) The base for $\algebra F$ is the disjoint union of a two-element set, $\{\infty_1, \infty_2\}$, and $\mathbb{N}$.   We write $\mathrm{id}_A$ for the identity function on a set $A$. The elements of $\algebra{F}$ are precisely the partial functions of either of the forms
\begin{itemize}
\item $\mathrm{id}_A$, where $A$ is a finite subset of $\mathbb{N}$;

\item $\mathrm{id}_A \cup f$, where $A$ is a cofinite subset of $\mathbb{N}$ and $f$ is a permutation on $\{\infty_1, \infty_2\}$.
\end{itemize}

One can check that $\algebra{F}$ is closed under the operations of composition, preferential union, and antidomain. Thus we can view $\algebra F$ as a $\sigma$-algebra of partial functions for any signature $\sigma$ whose operations are definable by a term in the signature $\{\compo, \pref, \A\}$, in particular for any $\sigma \subseteq \{\arest, \compo,  \updatesymb, \pref, \D, \A\}$. It is easy to verify that the identity function $\iota \from \algebra F \to \algebra F$ is a join-complete representation, as follows. Let $S \subseteq \algebra F$. We must check that either $S$ has no join in $\algebra F$, or it has a join equal to $\bigcup S$.
\begin{itemize}
\item
If $S$ contains \emph{both} an element extending the identity on $\{\infty_1, \infty_2\}$ and an element extending the other permutation on $\{\infty_1, \infty_2\}$, then $S$ has no upper bounds, so in particular no least upper bounds.
\item
If $S$ contains precisely \emph{one} of these types of elements, then $\bigcup S \in \algebra F$, and this is necessarily the join of $S$.
\item
If $S$ contains \emph{neither} of these types of elements, then if $\bigcup S$ is finite, it belongs to $\algebra F$, so again is necessarily the join of $S$. Otherwise, if $\bigcup S$ is infinite then $S$ has two distinct minimal upper bounds formed from the union of $\bigcup S$ with the two possible permutations on $\{\infty_1, \infty_2\}$; thus $S$ has no join.
\end{itemize}
 However, the representation $\iota \from \algebra F \to \algebra F$ does not restrict to a complete representation of the Boolean algebra $\down (\mathrm{id}_{\mathbb N} \cup \mathrm{id}_{\{\infty_1, \infty_2\}})$, since $\meet \{ \mathrm{id}_{\{i, i+1,\dots\}} \cup \mathrm{id}_{\{\infty_1, \infty_2\}} \mid i \in \mathbb N\}= \emptyset$, but $\bigcap \{ \mathrm{id}_{\{i, i+1,\dots\}} \cup \mathrm{id}_{\{\infty_1, \infty_2\}} \mid i \in \mathbb N\}= \mathrm{id}_{\{\infty_1, \infty_2\}}$. Thus $\iota \from \algebra F \to \algebra F$ is not locally complete.
\end{example}

That meet complete is a strictly stronger notion than locally complete can be demonstrated with an algebra with only three elements.

\begin{example}[locally complete $\centernot\implies$ meet complete]\label{example:three-element}
On a two-element base set, take the identity function, one of the constant functions, and the empty function. This collection is closed under $\compo$, $\pref$, and $\A$, and the identity representation $\iota$ is locally complete, but does not represent the meet $0$ of the two nonzero elements as their intersection (which is nonempty), but rather as $\emptyset$.
\end{example}

The main results of the paper will be axiomatisations of the classes of algebras that are (meet-)completely representable by partial functions for signatures $\{\arest\} \subseteq \sigma \subseteq \{\arest, \intersect, \updatesymb, \pref\}$ (\Cref{theorem:no_composition}) or $\{\arest, \compo, \intersect\} \subseteq \sigma \subseteq \{\arest, \compo, \intersect, \updatesymb, \pref, \D, \A\}$ (\Cref{theorem:composition}); thus for 14 expressively distinct signatures (of which for 11 the various notions of complete representation coincide). Two of these complete representation classes have previously been axiomatised: for the signature $\{\compo, \intersect, \A\}$ in \cite{complete}, and for the signature $\{\arest, \intersect\}$ (in the guise of $\{\rest, \setminus\}$) in \cite{borlido2022difference}.

\section{Atomicity}\label{atomicity}

We begin our investigation of the complete representation classes by considering properties related to atoms, both for algebras and for representations.

\begin{definition}
Let $\algebra{P}$ be a poset with a least element, $0$. An \defn{atom} of $\algebra{P}$ is a minimal nonzero element of $\algebra{P}$. We write $\At(\algebra{P})$ to denote the set of atoms of $\algebra{P}$. We say that $\algebra{P}$ is \defn{atomic} if every nonzero element is greater than or equal to an atom. We say that the \defn{atoms are separating} if whenever $a \not\leq b \in \algebra P$ then there exists $c \in \At(\algebra P)$ with $c \leq a$ and $c \not\leq b$.
\end{definition}

%If $\algebra{P}$ is a poset, then 

Let $\sigma$ be a signature containing $\arest$. We noted in the proof of \Cref{lemma:boolean} that representations of $\sigma$-algebras necessarily represent the partial order by set inclusion. The following definition is meaningful for any notion of representation where this is the case.

\begin{definition}
Let $\algebra{P}$ be a poset with a least element and let $\theta$ be a representation of $\algebra{P}$. Then $\theta$ is \defn{atomic} if $x \in \theta(a)$ for some $a \in \algebra{P}$ implies $x \in \theta(b)$ for some $b \in \At(\algebra P)$ with $b \leq a$.\footnote{Often less strict forms of this definition are given, in situations where the difference is of no consequence. If intersection is in the signature, then it is not necessary to assume $b \leq a$. If $\algebra P$ has a maximum element $1$, we may assume $a = 1$.} 
\end{definition}

We will need the following theorem.

\begin{theorem}[Hirsch and Hodkinson \cite{journalsjsymlHirschH97a}]\label{thm:hirsch-hodkinson} Let $\algebra{B}$ be a Boolean algebra. A representation of $\algebra{B}$ as a field of sets is atomic if and only if it is complete.
\end{theorem}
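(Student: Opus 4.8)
The plan is to prove both directions of the equivalence for a representation $\theta$ of a Boolean algebra $\algebra{B}$ as a field of sets over some base $X$. I will use the fact (implicit in the field-of-sets setting) that $\theta$ represents the Boolean meet, join, and complement as set intersection, union, and complementation relative to $X$, and that $\theta$ represents the order by inclusion. The two properties I must relate are \emph{atomicity}, which says that every point $x \in X$ lying in some $\theta(a)$ lies in $\theta(b)$ for some atom $b \leq a$, and \emph{completeness}, which here (for Boolean algebras, where meet- and join-completeness coincide) says that $\theta$ preserves all existing suprema as unions and all existing nonempty infima as intersections.

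First I would prove the direction \emph{complete} $\implies$ \emph{atomic}. Suppose $x \in \theta(a)$. The key observation is that in a Boolean algebra, $a = \join \At(\down a)$ whenever the supremum exists, but more robustly, I should argue pointwise. Consider the set $S = \{c \in \algebra B \mid c \leq a \text{ and } x \notin \theta(c)\}$ together with the complementary idea. Actually the cleanest route is: let $a'$ be the join (if it exists) of all atoms below $a$; since $\theta$ is complete, $\theta(a') = \bigcup\{\theta(b) \mid b \in \At(\algebra B),\ b \leq a\}$. If $x$ failed to lie in any $\theta(b)$ for an atom $b \leq a$, I would need to derive a contradiction, using that the element $a \arest a'$ (the relative complement of $a'$ within $\down a$) would be a nonzero element containing $x$ with no atom below it — but in a \emph{complete} Boolean-algebra representation every nonzero element's set-image is a nonempty union of atom-images, forcing such an atom to exist. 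I expect the delicate point is justifying that completeness of the representation forces the algebra to be atomic and that the relevant supremum of atoms exists; I may instead argue directly by taking $S$ to be all elements $c \leq a$ with $x \in \theta(c)$ and showing $\meet S$ is an atom.

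Second I would prove \emph{atomic} $\implies$ \emph{complete}. Let $S \subseteq \algebra B$ with $\join S = s$ existing; I must show $\theta(s) = \bigcup \theta[S]$. The inclusion $\bigcup \theta[S] \subseteq \theta(s)$ is immediate from monotonicity of $\theta$. For the reverse, take $x \in \theta(s)$; by atomicity there is an atom $b \leq s$ with $x \in \theta(b)$. The crucial claim is that $b$ must lie below some element of $S$: if not, then $b \intersect t = 0$ for every $t \in S$ (since $b$ is an atom, $b \leq t$ or $b \intersect t = 0$), whence the complement $\compl{b}$ is an upper bound for $S$ strictly below $s$, contradicting $s = \join S$. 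Hence $b \leq t$ for some $t \in S$, giving $x \in \theta(b) \subseteq \theta(t) \subseteq \bigcup \theta[S]$. The meet-completeness statement follows by the dual argument (or by the Boolean self-duality of the field-of-sets representation, swapping unions with intersections and $S$ with the set of complements).

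The main obstacle I anticipate is the \emph{complete} $\implies$ \emph{atomic} direction, specifically isolating an atom below a given nonzero element purely from the completeness hypothesis without circularly assuming atomicity. The robust way to handle this is a Zorn's-lemma or direct-construction argument: given $x \in \theta(a)$, consider the family of elements $c \leq a$ with $x \in \theta(c)$, ordered by $\geq$, and show that completeness (applied to the infimum of a suitable chain, represented as an intersection of nonempty sets all containing $x$) produces a minimal such element, which one then checks is an atom. Everything else reduces to routine Boolean and set-theoretic bookkeeping, so I would keep those verifications brief.
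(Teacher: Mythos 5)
Your second half (atomic $\implies$ complete) is essentially correct and is the standard argument: given $x \in \theta(\join S)$, atomicity supplies an atom $b \leq \join S$ with $x \in \theta(b)$, and $b$ must sit below some $t \in S$. One small repair: if $b \intersect t = 0$ for all $t \in S$, then $\compl{b}$ is an upper bound for $S$, but it is not ``strictly below'' $\join S$ as you wrote; rather $\join S \leq \compl{b}$ gives $b \leq \join S \leq \compl{b}$, forcing $b = 0$, a contradiction (alternatively, use the upper bound $\join S \intersect \compl{b}$, which genuinely lies strictly below $\join S$). The dual argument for meets also goes through directly: for $x \in \bigcap\theta[S]$ with $S \neq \emptyset$, atomicity gives an atom $b$ with $x \in \theta(b)$, and $x \in \theta(b) \cap \theta(s) = \theta(b \intersect s)$ forces $b \leq s$ for every $s \in S$, so $b \leq \meet S$. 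Note, for calibration, that the paper itself does not prove this theorem; it imports it from Hirsch and Hodkinson, so the comparison here is with their standard proof.

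The genuine gap is in the direction complete $\implies$ atomic, exactly where you anticipated trouble, and neither of your proposed repairs closes it. The element $a'$ (the join of all atoms below $a$) need not exist, and completeness of the \emph{representation} is a conditional property: it asserts that \emph{existing} meets and joins are represented as intersections and unions, but it manufactures no meets or joins in $\algebra{B}$, so you cannot invoke $\join\At(\down a)$. The Zorn/chain fallback fails for the same reason: a chain in your family $S = \{c \leq a \mid x \in \theta(c)\}$, ordered by $\geq$, need have no lower bound \emph{in} $S$, because the infimum of the chain may simply fail to exist in $\algebra{B}$; completeness gives you nothing to apply until an infimum is already known to exist. The missing idea is an ultrafilter argument that produces the needed meet for free. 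Let $F_x = \{c \in \algebra{B} \mid x \in \theta(c)\}$; since $\theta$ represents the Boolean operations, $F_x$ is an ultrafilter. Now observe that any lower bound $d$ of $F_x$ either lies in $F_x$, in which case $d$ is the \emph{least} element of $F_x$ and hence an atom (if $0 < e < d$, then $e \in F_x$ contradicts minimality, while $\compl{e} \in F_x$ gives $d \leq \compl{e}$ and so $e \leq d \leq \compl{e}$, forcing $e = 0$); or else $\compl{d} \in F_x$, whence $d \leq \compl{d}$ and $d = 0$. So if $F_x$ contains no atom, then $0$ is the \emph{only} lower bound of $F_x$, hence $\meet F_x$ exists and equals $0$, and completeness yields $x \in \bigcap \theta[F_x] = \theta(\meet F_x) = \theta(0) = \emptyset$, a contradiction. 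Thus some atom $b$ lies in $F_x$; finally, from $x \in \theta(b) \cap \theta(a) = \theta(b \intersect a)$ we get $b \intersect a \neq 0$, so $b \leq a$ since $b$ is an atom, which is the form of atomicity (with the atom below $a$) that the paper's definition requires. No Zorn-style construction is needed, and without this dichotomy your sketch of this direction does not go through.
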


Note that being completely representable does not imply a Boolean algebra is complete, but having an atomic representation \emph{does} imply a Boolean algebra is atomic. Hence the existence of Boolean algebras that are atomic but not complete, for example, the finite--cofinite algebra on any infinite set.

\begin{proposition}\label{prop:at-com}
Let $\sigma$ be a signature containing $\arest$. Let $\algebra{A}$ be a $\sigma$-algebra and $\theta$ be a representation of $\algebra{A}$ by partial functions. %If $\theta$ is meet complete, then it is atomic. If $\theta$ is atomic then it is locally complete.
Then $\theta$ is atomic if and only if it is locally complete.
\end{proposition}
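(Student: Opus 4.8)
The plan is to package both implications into a single chain of equivalences, reducing everything to the Hirsch--Hodkinson theorem (\Cref{thm:hirsch-hodkinson}) applied to the down-set Boolean algebras. The key preliminary observation is that the atoms of the Boolean algebra $\down a$ are exactly the atoms of $\algebra{A}$ lying below $a$: since $\down a$ carries the order inherited from $\algebra{A}$ and is downward closed, a minimal nonzero element of $\down a$ is the same thing as a minimal nonzero element of $\algebra{A}$ that is $\leq a$. Thus $\At(\down a) = \{b \in \At(\algebra{A}) \mid b \leq a\}$. I would record this first.

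Granting this, I would unwind the definition of atomicity of $\theta$. By \Cref{lemma:boolean}, for each $a$ the map $\theta$ restricts to a representation of the Boolean algebra $\down a$ as a field of sets over $\theta(a)$, with top element $a$; and for such a representation, atomicity amounts (using that $\down a$ has both a maximum and a meet, so both simplifications in the definition's footnote apply) to the requirement that every point $x \in \theta(a)$ lies in $\theta(b)$ for some $b \in \At(\down a)$. By the atom identification above, this is precisely the statement that $x \in \theta(a)$ implies $x \in \theta(b)$ for some atom $b \leq a$ of $\algebra{A}$. Quantifying over all $a$, this says exactly that $\theta$ is atomic. Hence $\theta$ is atomic if and only if, for every $a \in \algebra{A}$, the restriction of $\theta$ to $\down a$ is an atomic field-of-sets representation.

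Finally I would apply \Cref{thm:hirsch-hodkinson} conjunct by conjunct: for each $a$, the restriction of $\theta$ to $\down a$ is atomic if and only if it is complete. Quantifying over all $a$, the right-hand side becomes the statement that every restriction to a $\down a$ is complete, which is the definition of $\theta$ being locally complete. Composing the two equivalences gives $\theta$ atomic if and only if $\theta$ locally complete. I do not anticipate a real obstacle: all the substance is already supplied by \Cref{lemma:boolean} (the Boolean structure and the field-of-sets restriction) and \Cref{thm:hirsch-hodkinson} (the atomic $\iff$ complete equivalence). The one point requiring care is reconciling the two formulations of \emph{atomic}---the one for representations by partial functions, quantified over all $a$ with the side condition $b \leq a$, against the one for field-of-sets representations of a single Boolean algebra---but this is settled by the atoms-of-down-sets identity and is bookkeeping rather than a genuine difficulty.
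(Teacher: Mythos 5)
Your proof is correct, and its forward direction (locally complete implies atomic) coincides with the paper's: apply \Cref{thm:hirsch-hodkinson} to the complete restricted representation of $\down a$ and observe that an atom of $\down a$ is an atom of $\algebra{A}$ lying below $a$. Where you genuinely diverge is the converse. The paper proves atomic $\Rightarrow$ locally complete by a direct computation: given nonempty $S \subseteq \down a$ with $\meet S$ existing and $(x,y) \in \bigcap \theta[S]$, it extracts an atom $b$ with $(x,y) \in \theta(b)$ and then argues, via $(x,y) \in \theta(s \rest b)$ forcing $s \rest b = b$, that $b \leq s$ for every $s \in S$, hence $b \leq \meet S$; thus it uses only the ``complete $\Rightarrow$ atomic'' half of Hirsch--Hodkinson and verifies meet-completeness of each restriction by hand. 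You instead transfer atomicity of $\theta$ to atomicity of each restricted field-of-sets representation---using the identity $\At(\down a) = \{b \in \At(\algebra{A}) \mid b \leq a\}$, which does hold because $\down a$ is downward closed and its Boolean order (defined via $\rest$) is exactly the induced order---and then invoke the ``atomic $\Rightarrow$ complete'' half of \Cref{thm:hirsch-hodkinson}. The one delicate point in your route is reconciling the poset-level definition of atomic (with the side condition $b \leq a$) with the Boolean field-of-sets one, and you handle it correctly: the footnote's two simplifications apply since $\down a$ has a top and its meet is represented as intersection, and the condition $b \leq a$ is automatic because atoms of $\down a$ lie below $a$. As for what each approach buys: yours is shorter and symmetric, treating \Cref{lemma:boolean} and both directions of Hirsch--Hodkinson as black boxes; the paper's hands-on converse avoids relying on the deeper (atomic $\Rightarrow$ complete) direction and shows concretely how an atom witnessing a point of $\bigcap \theta[S]$ is forced below the meet---but since Hirsch--Hodkinson is cited wholesale anyway, nothing is lost in your version.
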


\begin{proof}
 Suppose that $\theta$ is locally complete. Let $(x, y)$ be a pair contained in $\theta(a)$ for some $a \in \algebra{A}$. %By \Cref{lemma:boolean}, the map $\theta$ restricts to a complete representation of $\down a$ as a field of sets. Hence, 
 By \Cref{thm:hirsch-hodkinson}, $(x, y) \in \theta(b)$ for some atom $b$ of the Boolean algebra $\down a$. Since an atom of $\down a$ is clearly an atom of $\algebra{A}$, and $b \leq a$, the representation $\theta$ is atomic.
 
Conversely, suppose that $\theta$ is atomic, $a \in \algebra A$, and $S$ is a nonempty subset of $\down a$ such that $\meet S$ exists. It is always true that $\theta(\meet S) \subseteq \bigcap \theta[S]$, regardless of whether or not $\theta$ is atomic. For the reverse inclusion, we have
\[\begin{array}{cll}
 & (x, y) \in \bigcap \theta[S]
\\ \implies & (x, y) \in \theta(s) & \text{for all }s \in S
\\ \implies & (x, y) \in \theta(b) & \text{for some atom }b\text{ such that }(\forall s \in S)\text{ } b \leq s
\\ \implies & (x, y) \in \theta(b) & \text{for some atom }b\text{ such that } b \leq \meet S
\\ \implies &  (x, y) \in \theta(\meet S)\text{.}
\end{array}\]
To see that the third line follows from the second, first take an atom $b$ with $(x, y) \in \theta(b)$---which exists by the second line, since $S \neq \emptyset$. Then we have $(x, y) \in \theta(s \rest b)$ for any $s \in S$. So for all $s \in S$, the element $s \rest b$ is nonzero, so equals $b$, since $b$ is an atom. Since $b, s \in \down a$, it is also the case that $s \rest b \leq s$; hence $b \leq s$.
\end{proof}

%With \Cref{prop:at-com}, we can obtain a first necessary condition for representable algebras to be (meet-)completely representable.

\begin{corollary}\label{cor:separating}
Let $\sigma$ be a signature containing $\arest$, and let $\algebra{A}$ be a $\sigma$-algebra. If $\algebra{A}$ is meet-completely representable by partial functions then the atoms of $\algebra A$ are separating.
\end{corollary}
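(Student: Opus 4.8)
The plan is to prove the contrapositive in a form adapted to the theory we have built: a meet-complete representation is in particular atomic (via the chain of implications established so far), and an atomic representation has enough atoms to separate any two elements that are not comparable. The key tools are \Cref{prop:at-com}, which tells us that a meet-complete representation is locally complete and hence atomic, together with the faithfulness of a representation, i.e.\ the fact that $\theta$ is an isomorphism and so represents the order by inclusion (equation \eqref{represent-poset}).

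So first I would assume $\algebra A$ has a meet-complete representation $\theta$. By the implications \emph{meet complete} $\implies$ \emph{locally complete} (\Cref{lemma:boolean}\eqref{item:meet} together with the definition of locally complete) and \emph{locally complete} $\iff$ \emph{atomic} (\Cref{prop:at-com}), I conclude that $\theta$ is an atomic representation. Next, to show the atoms separate, I would take $a, b \in \algebra A$ with $a \not\leq b$. By \eqref{represent-poset} this means $\theta(a) \not\subseteq \theta(b)$, so I can choose a pair $(x, y) \in \theta(a) \setminus \theta(b)$. Applying atomicity to the point $(x,y) \in \theta(a)$ yields an atom $c$ with $c \leq a$ and $(x, y) \in \theta(c)$.

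It remains to check $c \not\leq b$. This is the one genuinely load-bearing step, though it is short: if we had $c \leq b$, then by \eqref{represent-poset} we would get $\theta(c) \subseteq \theta(b)$, and hence $(x,y) \in \theta(c) \subseteq \theta(b)$, contradicting the choice of $(x,y) \notin \theta(b)$. Therefore $c \leq a$, $c \not\leq b$, and $c$ is an atom, which is exactly the separating condition. I do not anticipate any serious obstacle: the entire argument is a matter of correctly invoking \Cref{prop:at-com} to upgrade meet-completeness to atomicity and then unwinding the definition of an atomic representation against the faithfulness of $\theta$. The only point requiring care is making sure the atom $c$ produced by atomicity is genuinely an atom of $\algebra A$ (not merely of some down-set), but this is immediate since the definition of atomic representation already delivers $c \in \At(\algebra P)$ with $c \leq a$.
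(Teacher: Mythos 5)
Your proposal is correct and follows essentially the same route as the paper's own proof: invoke \Cref{lemma:boolean}\eqref{item:meet} to get local completeness, upgrade to atomicity via \Cref{prop:at-com}, pick $(x,y) \in \theta(a) \setminus \theta(b)$, extract an atom $c \leq a$ with $(x,y) \in \theta(c)$, and conclude $c \not\leq b$ from $\theta(c) \not\subseteq \theta(b)$. No gaps; the step you flag as load-bearing is handled exactly as in the paper.
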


\begin{proof}
Let $a \not\leq b \in \algebra A$. Let $\theta$ be any meet-complete representation of $\algebra{A}$. Then $\theta(a) \not\subseteq \theta(b)$, so there exists $(x, y) \in \theta(a) \setminus \theta (b)$. By \Cref{lemma:boolean}\eqref{item:meet}, the representation $\theta$ is locally complete. Then by \Cref{prop:at-com}, the representation $\theta$ is atomic, so $(x, y) \in \theta(c)$ for some $c \in \At(\algebra{A})$ with $c \leq a$. Since $(x, y) \in \theta(c)$ and $(x,y) \not\in \theta(b)$, we know $\theta(c) \not\subseteq \theta(b)$, and thus $c \not\leq b$.%Then $(x, y) \in \theta(a \intersect b)$, so $a \intersect b > 0$, from which we may conclude that the atom $b$ satisfies $b \leq a$.
\end{proof}

\begin{corollary}\label{cor:atomic}
Let $\sigma$ be a signature containing $\arest$, and let $\algebra{A}$ be a $\sigma$-algebra. If $\algebra{A}$ is meet-completely representable by partial functions then $\algebra{A}$ is atomic.
\end{corollary}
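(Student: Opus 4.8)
The plan is to derive atomicity as an immediate consequence of \Cref{cor:separating}, which under the same hypothesis already establishes that the atoms of $\algebra{A}$ are separating. The key observation is that atomicity is nothing but the special case of the separating-atoms property in which we compare an element against the least element $0$, so essentially no further argument is needed.

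Concretely, I would begin with an arbitrary nonzero element $a \in \algebra{A}$ and aim to produce an atom lying below it. Since $0$ is the least element of $\algebra{A}$ with respect to the order of equation \eqref{define_order}, we have $0 \leq a$; hence $a \not\leq 0$, for otherwise $a = 0$, contrary to assumption. Invoking \Cref{cor:separating}, the atoms are separating, so applying the separating condition to the pair $a \not\leq 0$ produces an atom $c \in \At(\algebra{A})$ with $c \leq a$ and $c \not\leq 0$. The clause $c \not\leq 0$ is automatic for any atom, but $c \leq a$ is precisely what is required: every nonzero $a$ has an atom beneath it, which is the definition of $\algebra{A}$ being atomic.

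I do not expect any genuine obstacle here, since the substantive content is entirely carried by \Cref{cor:separating} and this result is a one-line specialisation of it. The only point needing care is the elementary observation that a nonzero element cannot lie below $0$, which relies solely on $0$ being the least element of the poset — the fact recorded immediately after equation \eqref{define_order}.
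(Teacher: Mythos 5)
Your proposal is correct and takes the same route as the paper, whose entire proof is the one-line remark that the atoms being separating is a stronger condition than atomicity; your argument simply makes explicit the specialisation to the pair $a \not\leq 0$ that the paper leaves implicit.
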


\begin{proof}
The atoms being separating is a stronger condition than a poset being atomic.
%Let $a$ be a nonzero element of $\algebra{A}$. Let $\theta$ be any meet-complete representation of $\algebra{A}$. Then $\varnothing = \theta(0) \neq \theta(a)$, so there exists $(x, y) \in \theta(a)$. By \Cref{lemma:boolean}\eqref{item:meet}, the representation $\theta$ is locally complete. Then by \Cref{prop:at-com}, the representation $\theta$ is atomic, so $(x, y) \in \theta(b)$ for some $b \in \At(\algebra{A})$ with $b \leq a$. %Then $(x, y) \in \theta(a \intersect b)$, so $a \intersect b > 0$, from which we may conclude that the atom $b$ satisfies $b \leq a$.
\end{proof}

So far we have exploited the Boolean algebras that are contained in any representable $\sigma$-algebra, when $\sigma$ contains $\arest$. But we can also travel in the opposite direction and interpret any Boolean algebra as an algebra of any signature $\sigma \subseteq \{\arest, \compo, \intersect, \updatesymb, \pref, \D, \A\}$, by using the Boolean operations to give interpretations to the symbols in $\sigma$ as follows.
\begin{itemize}
\item
$a \arest b \coloneqq \compl a \intersect b$

\item
$a \compo b \coloneqq a \intersect b$
\item
$a \intersect b \coloneqq a \intersect b$
\item
$\update a b \coloneqq a$
\item
$a \pref b \coloneqq a \bjoin b$
\item
$\D(a) \coloneqq a$
\item
$\A(a) \coloneqq \compl a$
\end{itemize} Again this enables us to easily prove results about $\sigma$-algebras using results about Boolean algebras.

We know by the following argument that a Boolean algebra, $\algebra{B}$, viewed as an algebra of a signature $\sigma \subseteq \{\arest, \compo, \intersect, \updatesymb, \pref, \D, \A\}$, is representable by partial functions. By Stone's representation theorem we may assume that $\algebra{B}$ is a field of sets. Then the set of all identity functions on elements of $\algebra{B}$ forms a representation of $\algebra{B}$ by partial functions. Using the same argument, if a Boolean algebra is completely representable as a field of sets then the $\sigma$-algebra obtained from it is completely representable by partial functions. %This implication is in fact an equivalence, since by \Cref{cor:atomic}, `completely representable by partial functions' implies `atomic', and it is well known that atomic Boolean algebras are completely representable.

Hirsch and Hodkinson used \Cref{thm:hirsch-hodkinson} to identify those Boolean algebras that are completely representable as a field of sets as precisely the atomic Boolean algebras.\footnote{This result, that a Boolean algebra is completely representable if and only if it is an atomic algebra, had also been discovered previously by Abian \cite{Abian01051971}.} Hence a Boolean algebra is completely representable by partial functions if and only if it is atomic. The following proposition uses this fact to prove various negative results about the axiomatisability of classes of completely representable $\sigma$-algebras.

\begin{proposition}\label{closure}
Let $\{\arest\} \subseteq \sigma \subseteq \{\arest, \compo, \intersect, \updatesymb, \pref, \D, \A\}$. The class of $\sigma$-algebras that are (meet-)completely representable by partial functions is not closed with respect to the operations shown in the following table and so is not axiomatisable by first-order theories of the indicated corresponding form.

\hfill \\
\begin{tabular}{r  l  l }

& Operation & Axiomatisation \\[1mm]

(i) & subalgebra & universal \\

(ii) & directed union & universal-existential \\

(iii) & homomorphism & positive \\

\end{tabular}
\end{proposition}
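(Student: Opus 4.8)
The plan is to reduce all three non-closure assertions to the single, classical fact that atomicity of Boolean algebras is not preserved under subalgebras, under unions of chains, or under homomorphic images, and then to invoke the corresponding preservation theorems. The bridge is the functor sending a Boolean algebra $\algebra B$ to a $\sigma$-algebra, described immediately before the statement. I would first record that, for Boolean algebras viewed in this way, being (meet-)completely representable by partial functions is \emph{equivalent} to being atomic: the forward direction is \Cref{cor:atomic} (meet-complete representability forces atomicity, uniformly in $\sigma$), and the backward direction is the observation already made in the text that an atomic Boolean algebra, realised as a field of sets, is (meet-)completely represented by the identity functions on its elements. I would then note that a Boolean homomorphism, a Boolean subalgebra inclusion, and a union of a chain of Boolean subalgebras each preserve every Boolean operation and hence every $\sigma$-operation; so each is simultaneously a construction in the category of $\sigma$-algebras, with the induced $\sigma$-structure agreeing with the standard interpretation. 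Thus every Boolean-algebraic witness of non-closure transfers verbatim to the $\sigma$-setting.

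With this reduction, each row of the table becomes a standard example paired with the preservation direction of a standard theorem. For (i) I would take the atomic algebra $\powerset(\mathbb N)$ together with an atomless Boolean subalgebra---for instance the subalgebra generated by a countably infinite independent family, which is free on $\aleph_0$ generators and hence atomless; since a universal theory has a substructure-closed model class ({\L}o\'s--Tarski), non-closure under subalgebras rules out universal axiomatisability. For (ii) I would present the atomless free Boolean algebra on $\aleph_0$ generators as the union of the increasing chain of its finite, hence atomic, subalgebras $\langle g_1,\dots,g_n\rangle$; as universal-existential theories are preserved under unions of chains (Chang--{\L}o\'s--Suszko), non-closure here rules out universal-existential axiomatisability. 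For (iii) I would use the quotient map $\powerset(\mathbb N)\to\powerset(\mathbb N)/\mathrm{fin}$ from an atomic algebra onto an atomless one; being a surjective Boolean homomorphism it is a surjective $\sigma$-homomorphism, and since positive theories are preserved under surjective homomorphic images (Lyndon), non-closure rules out positive axiomatisability.

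The only place that genuinely needs care---the main obstacle---is the reduction paragraph, not the examples. I must check that the equivalence \emph{atomic} $\Leftrightarrow$ \emph{(meet-)completely representable} holds in \emph{both} directions and uniformly over every signature in the stated range, and that passing to a Boolean subalgebra, chain union, or quotient really does land inside the class of $\sigma$-algebras with its operations computed by the same Boolean formulas on the resulting algebra. Once these bookkeeping points are settled, the three witnesses are textbook Boolean algebras and the non-axiomatisability conclusions are immediate from the easy (preservation) halves of the three named theorems.
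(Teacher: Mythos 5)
Your proposal is correct and follows essentially the same route as the paper: both reduce all three rows to the fact that a Boolean algebra, reinterpreted as a $\sigma$-algebra via the Boolean-term definitions of the operations, is (meet-)completely representable by partial functions if and only if it is atomic, and then exhibit the three classical Boolean witnesses of non-closure. Your only deviations are cosmetic---an explicitly free (independently generated) atomless subalgebra of $\powerset(\mathbb N)$ where the paper takes an abstract atomless algebra embedded via Stone, and a chain of finite subalgebras where the paper takes the directed family of all finitely generated ones---and your ``bookkeeping'' paragraph (term operations are preserved by Boolean homomorphisms, subalgebras, and chain unions) is exactly the implicit content of the paper's bridge.
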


\begin{proof}
The proof is identical to the proof for the particular signature $\{\compo, \intersect, \A\}$ found in \cite{complete}; we reproduce it here for convenience.

 In each case we use the fact, which we noted previously, that a Boolean algebra is completely representable by partial functions if and only if it is atomic.
\begin{enumerate}[(i)]
\item
We show that the class is not closed under subalgebras. It follows that the class cannot be axiomatised by any universal first-order theory. Let $\algebra{B}$ be any non-atomic Boolean algebra, for example the countable atomless Boolean algebra, which is unique up to isomorphism. By Stone's representation theorem we may assume that $\algebra{B}$ is a field of sets, with base $X$ say. Then $\algebra{B}$ is a subalgebra of $\powerset(X)$ and $\powerset(X)$ is atomic, but $\algebra{B}$ is not.
\item
We show that the class is not closed under directed unions. It follows that the class cannot be axiomatised by any universal-existential first-order theory. Again, let $\algebra{B}$ be any non-atomic Boolean algebra. Then $\algebra{B}$ is the union of its finitely generated subalgebras, which form a directed set of algebras. The finitely generated subalgebras, being Boolean algebras, are finite and hence atomic. So we have, as required, a directed set of atomic Boolean algebras whose union is not atomic.
\item
We show that the class is not closed under homomorphic images. It follows that the class cannot be axiomatised by any positive first-order theory. Let $X$ be any infinite set and $I$ the ideal of $\powerset(X)$ consisting of finite subsets of $X$. Then $\powerset(X)$ is atomic, but the quotient $\powerset(X) / I$ is atomless and nontrivial and so is not atomic.
\qedhere
\end{enumerate}
\end{proof}

Since we have mentioned the subalgebra and homomorphism operations, we note that for each signature $\{\arest\} \subseteq \sigma \subseteq \{\arest, \compo, \intersect, \updatesymb, \pref, \D, \A\}$, the class of (meet\mbox{-)}com\-pletely representable $\sigma$-algebras \emph{is} closed under direct products. Indeed, it is routine to verify that given complete representations of each factor in a product we can form a complete representation of the product using disjoint unions in the obvious way.

\begin{proposition}\label{axioms}
Let $\{\arest\} \subseteq \sigma \subseteq \{\arest, \compo, \intersect, \updatesymb, \pref, \D, \A\}$. The class of $\sigma$-algebras that are (meet-)completely representable by partial functions is not axiomatisable by any existential-universal-existential first-order theory. 
\end{proposition}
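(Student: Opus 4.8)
The plan is to reduce, exactly as in the proof of \Cref{closure}, to a statement about Boolean algebras and then invoke a preservation theorem for the prenex class $\exists\forall\exists$. Recall the term-interpretation sending a Boolean algebra $\algebra B$ to a $\sigma$-algebra $\algebra B^\sigma$; since every $\sigma$-operation is interpreted by a quantifier-free Boolean term, substituting these definitions into any first-order sentence leaves its quantifier prefix untouched, and $\algebra B^\sigma$ is (meet-)completely representable by partial functions if and only if $\algebra B$ is atomic. Hence, were the class of (meet-)completely representable $\sigma$-algebras axiomatised by an $\exists\forall\exists$ theory $T$, the substituted theory would be an $\exists\forall\exists$ theory of Boolean algebras whose models are exactly the atomic ones. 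As the atomic Boolean algebras are finitely axiomatisable (the Boolean axioms together with a single sentence asserting atomicity), compactness then yields a single $\exists\forall\exists$ sentence $\gamma$ equivalent, over Boolean algebras, to atomicity; equivalently, non-atomicity is then expressed over Boolean algebras by the $\forall\exists\forall$ sentence $\neg\gamma$. So it suffices to show that the non-atomic Boolean algebras are not axiomatised, relative to Boolean algebras, by any $\forall\exists\forall$ sentence.

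For this I would use only the easy direction of the following preservation fact: a $\forall\exists\forall$ sentence is preserved under unions of chains $\algebra B_0 \preceq_1 \algebra B_1 \preceq_1 \cdots$ whose inclusions are $1$-elementary (existential formulas with parameters are absolute between consecutive members). Indeed, writing such a sentence as $\forall \bar x\, \exists \bar y\, \forall \bar z\, \psi$ with $\psi$ quantifier-free, a tuple $\bar a$ from the union $\algebra B$ lies in some $\algebra B_i$, which supplies a witness $\bar b \in \algebra B_i$ with $\algebra B_i \models \forall \bar z\, \psi(\bar a, \bar b, \bar z)$; this universal statement, being absolute along $1$-elementary inclusions, pushes forward to give $\algebra B \models \forall \bar z\, \psi(\bar a, \bar b, \bar z)$. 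Since the Boolean axioms are themselves preserved under unions of chains, a $\forall\exists\forall$ definition of non-atomicity would force the non-atomic Boolean algebras to be closed under unions of $\preceq_1$-chains.

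It therefore remains to exhibit a $\preceq_1$-chain of non-atomic Boolean algebras whose union is atomic, contradicting that closure. I would build one by resolving atoms one block at a time. Partition a countable base into infinitely many infinite blocks $E_0, E_1, \dots$, fix a countable atomless subalgebra on each block, and let $\algebra B_i$ be generated by these atomless subalgebras, the block indicators, and additionally all singletons of the already-resolved blocks $E_0, \dots, E_{i-1}$; passing from $\algebra B_i$ to $\algebra B_{i+1}$ just adjoins the singletons of $E_i$, so the chain is increasing. Each $\algebra B_i$ is non-atomic, because the element $E_i$ has only its atomless subalgebra below it in $\algebra B_i$; but every block is eventually resolved, so the union contains all singletons and is atomic.

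The main obstacle is verifying that each step $\algebra B_i \preceq_1 \algebra B_{i+1}$ is $1$-elementary, i.e.\ that the atoms created when block $E_i$ is resolved cannot be detected by any existential formula with parameters from $\algebra B_i$. The one favourable ingredient is that the atomless Boolean algebras are exactly the existentially closed ones, so an atomless block sits $1$-elementarily inside any extension; the difficulty is propagating this local fact through the surrounding resolved (atomic) and still-atomless blocks. I would handle this via Tarski's quantifier elimination for Boolean algebras in the language augmented by the predicates counting the atoms below an element, reducing the absoluteness of an arbitrary existential formula to the absoluteness of these invariants on the single block being changed, where existential-closedness of the atomless part applies.
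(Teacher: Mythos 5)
Your overall architecture is sound and genuinely different from the paper's: the paper makes the same reduction to Boolean algebras via the quantifier-free term interpretation, but then directly shows, by a three-round Ehrenfeucht--Fra\"iss\'e game between an atomic and a non-atomic Boolean algebra each having infinitely many atoms, that every $\exists\forall\exists$ sentence true in the former is true in the latter. You instead compress the hypothetical $\exists\forall\exists$ theory into a single sentence by compactness (valid, since atomicity is finitely axiomatisable over Boolean algebras and a finite conjunction of $\exists\forall\exists$ sentences merges into one), invoke preservation of $\forall\exists\forall$ sentences along unions of $\preceq_1$-chains (your proof of the needed direction is correct), and build a $\preceq_1$-chain of non-atomic algebras with atomic union. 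Your chain does consist of non-atomic algebras (the relative algebra below $E_i$ in $\algebra B_i$ is atomless) and its union is atomic, so everything hinges on the one step you yourself flag.

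That step is where the genuine gap lies, and the strategy you sketch for it would fail as stated. The atom-counting predicates in Tarski's enriched language are not existential---``$y$ is an atom'' is $\Pi_1$, so ``there are at least $n$ atoms below $x$'' is $\Sigma_2$---and, more to the point, they are precisely \emph{not} absolute across your step: every nonzero element of $\algebra B_i$ below $E_i$ is atomless in $\algebra B_i$ but lies above infinitely many atoms in $\algebra B_{i+1}$, so every one of these invariants flips truth value on the changed block. Hence ``reducing absoluteness of an arbitrary existential formula to absoluteness of these invariants on the single block being changed'' cannot succeed. The step is nonetheless true, and two repairs are available. First, adjoining the singletons of $E_i$ alters only the relative algebra below $E_i$, so $\algebra B_i \cong (\algebra B_i \restriction E_i) \times \algebra C$ and $\algebra B_{i+1} \cong (\algebra B_{i+1} \restriction E_i) \times \algebra C$ with the same second factor; the first-factor inclusion is $\preceq_1$ because atomless Boolean algebras are existentially closed (your one favourable ingredient), and $\preceq_1$ is preserved by binary direct products of Boolean algebras, since an atomic formula $t = 0$ holds in a product iff it holds in both coordinates, whence an existential formula with parameters in the product is equivalent to a finite disjunction of conjunctions of coordinatewise existential formulas. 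Second, more elementarily: the only per-cell information a single existential block with parameters from $\algebra B_i$ can exploit is whether each cell of the partition generated by the parameters splits into $m$ disjoint nonzero parts, and these splittability invariants \emph{are} absolute here, because every finite element of $\algebra B_i$ is a finite union of singleton atoms in both algebras, while every infinite element of $\algebra B_i$ is already infinitely splittable inside $\algebra B_i$ (it contains either an infinite piece of an atomless block, infinitely many singletons, or infinitely many whole blocks). With either repair your argument goes through; as submitted, the pivotal step is both unproven and aimed at the wrong invariants.
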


\begin{proof}
Again, an identical copy of this proof first appeared in \cite{complete}, for the particular signature $\{\compo, \intersect, \A\}$.

Let $\algebra{B}$ be any atomic Boolean algebra with an infinite number of atoms and $\algebra{B'}$ be any Boolean algebra that is not atomic, but that also has an infinite number of atoms. We will show that $\algebra{B'}$ satisfies any existential-universal-existential sentence satisfied by $\algebra{B}$. Since $\algebra{B}$ is completely representable by partial functions and $\algebra{B'}$ is not, this shows that the complete representation class cannot be axiomatised by any existential-universal-existential theory.

We will show that for certain Ehrenfeucht--Fra\"{i}ss\'{e} games, duplicator has a winning strategy. For an overview of Ehrenfeucht--Fra\"{i}ss\'{e} games see, for example, \cite{HodgesW:modt}. Briefly, two players, spoiler and duplicator, take turns to choose elements from two algebras. Duplicator wins if the two sequences of choices determine an isomorphism between the subalgebras generated by all the elements chosen.

Consider the game in which spoiler must in the first round choose $n_1$ elements of $\algebra{B}$, in the second round $n_2$ elements of $\algebra{B'}$ and in the third and final round $n_3$ elements of $\algebra{B}$. Each round, duplicator responds with corresponding choices from the other algebra. Let $\varphi$ be any sentence in prenex normal form whose quantifiers are, starting from the outermost, $n_1$ universals, then $n_2$ existentials and finally $n_3$ universals. It is not hard to convince oneself that if duplicator has a winning strategy for the game then $\algebra{B'} \models \varphi \implies \algebra{B} \models \varphi$. Hence if duplicator has a winning strategy for all games of this form---where spoiler chooses finite numbers of elements from $\algebra{B}$ then $\algebra{B'}$ then $\algebra{B}$---then all universal-existential-universal sentences satisfied by $\algebra{B'}$ are satisfied by $\algebra{B}$. Equivalently, $\algebra{B'}$ satisfies any existential-universal-existential sentence satisfied by $\algebra{B}$, which is what we are aiming to show.

Since our algebras are Boolean algebras, a choice of a finite number of elements from one of the algebras generates a finite subalgebra, with a finite number of atoms. The atoms form a partition, that is, a sequence $(a_1,\ldots,a_n)$ of nonzero elements with $\join_i a_i = 1$ and $a_i \intersect a_j = 0$ for all $i \neq j$. As the game progresses and more elements are chosen, the partition is refined---the elements of the partition are (finitely) further subdivided. The elements the two players have actually chosen are all uniquely expressible as a join of some subset of the partition.

Suppose that, throughout the game, duplicator is able to maintain a correspondence between the partitions on the two algebras. That is, if spoiler subdivides an element $a$ of the existing partition into $(a_1,\ldots,a_n)$ then the element corresponding to $a$ should be partitioned into a corresponding $(a_1',\ldots,a_n')$. Then clearly this determines a winning sequence of moves for duplicator: each of spoiler's choices is the join of some subset of one partition and duplicator's choice should be the join of the corresponding elements of the other partition. At the end of the game there will exist an isomorphism between the generated subalgebras that sends each element chosen during the game to the corresponding choice from the other algebra. Hence a strategy for maintaining a correspondence between the two partitions provides a winning strategy for duplicator.

For an element $a$ of $\algebra{B}$ or $\algebra{B'}$ we will say that $a$ is of size $n$, for finite $n$, if $a$ is the join of $n$ distinct atoms, otherwise $a$ is of infinite size. Duplicator can maintain a correspondence by playing as follows.

\begin{description}
\item[Round 1] (Spoiler plays on atomic algebra, duplicator on non-atomic) Duplicator should simply provide a partition with matching sizes.

\item[Round 2] (Spoiler non-atomic, duplicator atomic) For subdivisions of elements of finite size, duplicator can provide a subdivision with matching sizes. For subdivisions of elements of infinite size, there is necessarily at least one element in the subdivision of infinite size---duplicator should select one such, match everything else with distinct single atoms and match this infinite size element with what remains on the atomic side.

\item[Round 3] (Spoiler atomic, duplicator non-atomic) At the start of this round every element of the partition of the atomic algebra is matched with something of greater or equal size on the non-atomic side. Hence duplicator can easily provide matching subdivisions.\qedhere
\end{description}
\end{proof}

\section{Distributivity}\label{dist}

We now turn our attention to the validity of various distributive laws with respect to the classes of representable and (meet-)completely representable $\sigma$-algebras for signatures $\sigma$ including $\{\arest, \compo\}$. %We give the first definition that we will use explicitly. Other distributive properties that we refer to later are defined similarly. For distributive properties `over meets' it should be assumed that definitions only require that the relevant equation holds when \emph{nonempty} subsets are used.

\begin{definition}
Let $\algebra{P}$ be a poset and $*$ be a binary operation on $\algebra{P}$. We say that $*$ is \defn{completely right-distributive over joins} if, for any subset $S$ of $\algebra{P}$ and any $a \in \algebra{P}$, if $\join S$ exists, then
\[\join S * a = \join(S * \{a\})\text{.}\]
\end{definition}

\begin{proposition}
Let $\sigma$ be a signature including $\{\arest, \compo\}$, and let $\algebra{A}$ be a $\sigma$-algebra that is representable by partial functions. Then composition is completely right-distributive over joins.
\end{proposition}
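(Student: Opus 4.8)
The plan is to fix a representation $\theta$ of $\algebra A$ and to show that $t\compo a$, where $t\coloneqq\join S$, is the least upper bound of $S\compo\{a\}$; this simultaneously yields the existence of $\join(S\compo\{a\})$ and the desired equality. That $t\compo a$ is an \emph{upper bound} is immediate from monotonicity of composition of partial functions in its left argument: from $\theta(s)\subseteq\theta(t)$ we get $\theta(s\compo a)=\theta(s)\compo\theta(a)\subseteq\theta(t)\compo\theta(a)=\theta(t\compo a)$, hence $s\compo a\le t\compo a$ for every $s\in S$.

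The substance is to show $t\compo a\le b$ for an arbitrary upper bound $b$ of $S\compo\{a\}$, and I would argue by contradiction, exploiting that $t$ is the \emph{least} upper bound of $S$. Suppose $\theta(t\compo a)\not\subseteq\theta(b)$, so there is a pair $(x,z)\in\theta(t)\compo\theta(a)$ with $(x,z)\notin\theta(b)$; fix $y$ with $(x,y)\in\theta(t)$ and $(y,z)\in\theta(a)$. The first observation is that every $s\in S$ is undefined at $x$: were $x\in\dom\theta(s)$, then since $\theta(s)\subseteq\theta(t)$ is a partial function it would contain $(x,y)$, giving $(x,z)\in\theta(s\compo a)\subseteq\theta(b)$, a contradiction. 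Thus $x$ lies in the ``excess'' set $\dom\theta(t)\setminus\bigcup_{s\in S}\dom\theta(s)$ of the join, and deleting from $t$ any collection of such excess points leaves an element still above every $s$.

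The goal is therefore to manufacture an element $c\in\algebra A$ that is an upper bound of $S$ yet strictly below $t$, contradicting $t=\join S$. The clean first step is to control the \emph{domain}: applying leastness of $t$ to the element $c\coloneqq\bigl(b\arest(t\compo a)\bigr)\arest t$ — which antidomain-restricts $t$ by exactly the points of $\dom\theta(t\compo a)$ lying outside $\dom\theta(b)$, all of which are excess points — forces $c=t$ and hence $\dom\theta(t\compo a)\subseteq\dom\theta(b)$. The main obstacle is the remaining possibility that $b$ is \emph{defined} at $x$ but with a value different from $z$: antidomain restriction only sees domains, so in the intersection-free signatures there is no immediate term isolating the set where two partial functions of equal domain disagree, and the naive marker $b\arest t$ fails because $\dom\theta(b)$ may meet some $\dom\theta(s)$. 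I would resolve this by again localising the disagreement to the excess domain — on $\bigcup_s\dom\theta(s\compo a)$ the bound $b$ must agree with $t\compo a$ — and then applying leastness of $t$ a second time to a suitably refined restrictor built from $t$, $a$ and $b$. This value-agreement step, where the absence of intersection prevents a direct term construction, is the part I expect to be the crux of the argument.
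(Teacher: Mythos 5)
Your first two steps reproduce the paper's proof exactly: the upper-bound half via monotonicity, and then the restrictor $c=(b\arest(\join S\compo a))\arest\join S$, whose domain avoids $\dom\theta(s)$ for every $s\in S$, so that leastness of $\join S$ forces $\dom\theta(\join S\compo a)\subseteq\dom\theta(b)$. Up to that point you match the paper line for line (the paper phrases it directly rather than by contradiction, but the term and the use of leastness are identical).

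The gap is that you never carry out the value-agreement step, and you are right that it is the crux: you only ``expect'' a second application of leastness to ``a suitably refined restrictor built from $t$, $a$ and $b$'', without exhibiting one, so as it stands the proposal is not a proof. You should know that the paper's own proof contains no such step either: it passes from $b\rest(\join S\compo a)=\join S\compo a$ to $\join S\compo a\leq b$ ``by the definition of $\leq$'', but by equation (1) the order is $u\leq b\iff u\rest b=u$, whereas what has been established is $b\rest u=u$, which says only that $\dom u\subseteq\dom b$; so the paper silently waves through exactly the point you flagged. When $\intersect$ is in the signature your plan can be completed: the element $d=(u\intersect b)\arest u$ (that is, $u\setminus b$, with $u=\join S\compo a$) has domain precisely the points of $\dom u$ where $b$ fails to agree with $u$, all of which are excess points, so $d\arest\join S\geq s$ for every $s\in S$, leastness gives $d=0$, and $u\leq b$ follows. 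Without $\intersect$, however, no term of $\{\arest,\compo\}$ isolates the disagreement set, and the step appears genuinely unfillable: on $X=\mathbb N\cup\{\alpha,z,w\}$ take $g_i=\mathrm{id}_{\{1,\dots,i\}}$, $t=\mathrm{id}_X$, $a=\{(n,n+1)\mid n\in\mathbb N\}\cup\{(\alpha,z),(z,z),(w,w)\}$, and $b$ the same function except with $(\alpha,w)$ in place of $(\alpha,z)$. The $\{\arest,\compo\}$-closure of these consists of uniform shift-by-$k$ functions with finitely many points deleted (each cofinitely defined element being defined on $\alpha$, $z$, $w$ and fixing $z$ and $w$) together with restrictions of $t$; one checks that $t$ is the only element extending $\mathrm{id}_{\mathbb N}$, so $\join_i g_i=t$, yet $\{g_i\compo a\mid i\in\mathbb N\}$ has the two incomparable minimal upper bounds $a$ and $b$, so $\join(S\compo\{a\})$ does not exist while $\join S\compo a=a$. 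So for the bare signature $\{\arest,\compo\}$ the value-agreement step is not a missing lemma but an actual failure point of the statement; your instinct to treat it as the crux rather than as routine was sound, but the refined restrictor you hoped for exists only once $\intersect$ (equivalently $\setminus$) is available.
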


\begin{proof}
As $\algebra{A}$ is representable, we may assume the elements of $\algebra{A}$ are partial functions. Let $S$ be a subset of $\algebra{A}$ such that $\join S$ exists and let $a \in \algebra{A}$.

Firstly, for all $s \in S$ we have $\join S \compo a \geq s \compo a$  and so $\join S \compo a$ is an upper bound for $S \compo \{a\}$.

Now suppose that $b \in \algebra{A}$ is an upper bound for $S \compo \{a\}$, that is, for all $s \in S$, we have $b \geq s \compo a$. For $s \in S$, suppose $s$ is defined on $x$ and let $s(x) = y$. If $a$ is defined on $y$, then $s \compo a$ is defined on $x$, so, since $b \geq s \compo a$, in this case $b$ is defined on $x$. If $a$ is \emph{not} defined on $y$ then, as $(\join S)(x) = y$, in this case $\join S \compo a$ is not defined on $x$. Hence the domain of $b \arest (\join S \compo a)$ is disjoint from the domain of $s$. Therefore
%\[(\D(b \intersect (\join S \compo a)) \bjoin \A(\join S \compo a)) \compo \join S \geq s\text{.}\]
\[(b \arest (\join S \compo a)) \arest \join S \geq s\text{.}\]
Since $s$ was an arbitrary element of $S$, we have
%\[(\D(b \intersect (\join S \compo a)) \bjoin \A(\join S \compo a)) \compo \join S \geq \join S\]
\[(b \arest (\join S \compo a)) \arest \join S \geq \join S,\]
and so $b \arest (\join S \compo a)$ is disjoint from the domain of $\join S$. Since the domain of $\join S \compo a$ is included in the domain of $\join S$, we see that $b \arest (\join S \compo a)$ is disjoint from the domain of $\join S \compo a$.
%\[(\D(b \intersect (\join S \compo a)) \bjoin \A(\join S \compo a)) \compo \join S = \join S \text{.}\]
%\[(b \arest (\join S \compo a)) \arest \join S = \join S\text{.}\]
%
%Therefore
%\begin{align*}
%\D(b \intersect (\join S \compo a)) \compo \join S \compo a &= (\D(b \intersect (\join S \compo a)) \bjoin \A(\join S \compo a)) \compo \join S \compo a
%\\ &= \join S \compo a\text{,}
%\end{align*}
Thus
\[(b \arest (\join S \compo a)) \arest (\join S \compo a) = \join S \compo a\text{.}\]
 The left-hand side is $b \rest (\join S \compo a)$, by the definition of $\rest$. Thus $\join S \compo a \leq b$, by the definition of $\leq$. %which says that wherever the function $\join S \compo a$ is defined, it agrees with the function $b$, that is to say $b \geq \join S \compo a$.
As $b$ was an arbitrary upper bound for $S \compo \{a\}$, we conclude that $\join S \compo a$ is the least upper bound for $S \compo \{a\}$.
\end{proof}

\begin{remark}\label{dist-laws}
For $\{\arest, \compo\}$-algebras representable by partial functions it is easy to see that for finite $S$, if $\join S$ exists, then
\begin{align*}%\label{law:ljoin}
a \compo \join S = \join(\{a\} \compo S)\text{.} && \text{(composition is left-distributive over joins)}\end{align*}
When $\intersect$ is in the signature, the corresponding law for meets also holds. That is, for $\{\arest, \compo, \intersect\}$-algebras representable by partial functions, for finite, \emph{nonempty} $S$,
\begin{align*}a \compo \meet S = \meet(\{a\} \compo S)\text{.} && \text{(composition is left-distributive over meets)}\end{align*}
When $\intersect$ is not in the signature, not even the binary-meet version of this law is valid, as can easily be shown with the algebra from \Cref{example:three-element}.
\end{remark}

We now give an example that shows that these distributive laws cannot, in general, be extended to arbitrary joins and meets. We will use this example to show that for any signature $\{\arest, \compo\} \subseteq \sigma \subseteq \{\arest, \compo, \intersect, \updatesymb, \pref, \D, \A\}$, there exist $\sigma$-algebras that are representable as partial functions, and whose atoms are separating, but that have no atomic representation.

\begin{example}\label{eg:noncdist}
Consider the following concrete algebra of partial functions, $\algebra{F}$. (We will clarify the signature shortly.) The base for $\algebra F$ is the disjoint union of a one-element set, $\{p\}$, and $\mathbb{N}_\infty \coloneqq \mathbb{N} \cup \{\infty\}$. Let $\mathcal{S}$ be all the subsets of $\mathbb{N}_\infty$ that are either finite and do not contain $\infty$ or cofinite and contain $\infty$. Let $f$ be the partial function defined only on $p$ and mapping $p$ to $\infty$.  As before, we write $\mathrm{id}_A$ for the identity function on a set $A$. The elements of $\algebra{F}$ are precisely the partial functions of the form $\mathrm{id}_A \cup g$, where both
\begin{itemize}
\item $A \in \mathcal S$;

\item $g$ is equal to $\varnothing$, $\mathrm{id}_{\{p\}}$, or $f$.
\end{itemize}

One can check that $\algebra{F}$ is closed under the operations of composition, intersection, preferential union, and antidomain. Thus we can view $\algebra F$ as a $\sigma$-algebra of partial functions for any signature $\sigma$ whose operations are definable by a term in the signature $\{\compo, \intersect, \pref, \A\}$. Let us assume that $\{\arest, \compo\} \subseteq \sigma$. One can now also check that the atoms of $\algebra{F}$ are separating.

For $i \in \mathbb{N}$, let $g_i$ be the restriction of the identity to $\{1,\ldots,i\}$. Then $\join_i g_i$ exists and is equal to the identity restricted to $\mathbb{N}_\infty$. So
\[f \compo \join_{i \in \mathbb{N}} g_i = f \neq \varnothing = \join_{i \in \mathbb{N}} (f \compo g_i)\text{.}\]

For $i \in \mathbb{N}$, let $h_i$ be the restriction of the identity to $\{i,\ldots\} \cup \{\infty\}$. Then $\meet_i h_i$ exists and is equal to the nowhere-defined function. So
\[f \compo \meet_{i \in \mathbb{N}} h_i = \varnothing \neq f = \meet_{i \in \mathbb{N}} (f \compo h_i)\text{.}\]
\end{example}

\begin{lemma}\label{lemma:compdist}
Let $\sigma$ be a signature that includes $\{\arest, \compo\}$, and let $\algebra{A}$ be a $\sigma$-algebra. If $\algebra A$ is join-completely representable by partial functions, then composition in $\algebra{A}$ is completely left-distributive over joins. If $\algebra A$ is meet-completely representable by partial functions, then composition in $\algebra{A}$ is completely left-distributive over meets.
\end{lemma}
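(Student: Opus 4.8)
The plan is to prove both claims by combining the general machinery of completely distributive behaviour under complete representations with the reduction results already established. First I would observe that a (join- or meet-)complete representation sends existing joins to unions and existing meets to intersections by definition, and that composition of partial functions distributes over arbitrary unions and intersections in the \emph{concrete} setting. So the strategy is: pass to a concrete representation, use the definition of completeness to replace $\theta(\join S)$ by $\bigcup \theta[S]$ (respectively $\theta(\meet S)$ by $\bigcap \theta[S]$), perform the set-theoretic distributivity of composition, and then fold everything back through $\theta$, using completeness a second time to recognise the resulting union (respectively intersection) as $\theta$ of the appropriate join (respectively meet).

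For the join case, let $\theta$ be a join-complete representation, let $S \subseteq \algebra{A}$ with $\join S$ existing, and fix $a \in \algebra{A}$. I would first note that $\join(\{a\} \compo S)$ exists and that $\theta(\join(\{a\}\compo S)) = \bigcup \theta[\{a\} \compo S]$ by join-completeness, \emph{provided} this join exists; the cleanest way to handle existence is to show directly that $\theta(a) \compo \theta(\join S)$ is the union $\bigcup_{s\in S} \theta(a)\compo\theta(s)$, which holds because in the concrete algebra $\theta(a) \compo \bigcup_s \theta(s) = \bigcup_s (\theta(a)\compo \theta(s))$ is a purely set-theoretic identity for partial functions. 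Since $\theta(\join S) = \bigcup \theta[S]$ by completeness, the left side equals $\theta(a \compo \join S)$, and the right side is $\bigcup \theta[\{a\}\compo S]$. It then remains to argue that this union is genuinely the \emph{least} upper bound of $\{a\}\compo S$ in $\algebra{A}$ and that $\theta$ sends it to this union; since $\theta$ is an isomorphism onto its image and the image is closed under the operations, $a \compo \join S$ is an upper bound for $\{a\}\compo S$, and any other upper bound $b$ satisfies $\theta(b) \supseteq \bigcup \theta[\{a\}\compo S] = \theta(a\compo\join S)$, whence $a\compo\join S \leq b$ by \eqref{represent-poset}. This establishes that $\join(\{a\}\compo S)$ exists and equals $a \compo \join S$, which is complete left-distributivity over joins.

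The meet case proceeds dually: with $\theta$ meet-complete and $S$ nonempty with $\meet S$ existing, I would use the concrete identity $\theta(a) \compo \bigcap_s \theta(s) = \bigcap_s (\theta(a) \compo \theta(s))$, which again is simply the statement that composing a fixed partial function on the left commutes with intersection of partial functions. Meet-completeness gives $\theta(\meet S) = \bigcap \theta[S]$, so the left side is $\theta(a \compo \meet S)$; the right side is $\bigcap \theta[\{a\} \compo S]$, and a lower-bound argument symmetric to the upper-bound argument above shows $a \compo \meet S = \meet(\{a\}\compo S)$.

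The main obstacle I anticipate is the set-theoretic distributivity over \emph{intersection}: whereas left-composition distributes over arbitrary unions with no caveats, distribution over arbitrary intersections for partial functions requires a small check, because in general relational composition only \emph{sub}-distributes over intersection on the right argument. Here it goes through cleanly precisely because the left argument $\theta(a)$ is a partial function (hence left-deterministic): if $(x,z) \in \theta(a)\compo \bigcap_s \theta(s)$ then the witnessing intermediate point $y$ with $(x,y)\in\theta(a)$ is \emph{unique}, so the same $y$ serves every $s$, giving $(x,z) \in \bigcap_s(\theta(a)\compo\theta(s))$; this is exactly where functionality, as opposed to merely being a relation, is used, and it is the one place the argument would fail for arbitrary binary relations. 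I would make sure to highlight that the nonemptiness of $S$ in the meet case is what keeps $\meet S$ and the intersection well-defined, matching the hypothesis carried over from \Cref{def:meet}.
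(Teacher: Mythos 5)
Your proposal is correct and takes essentially the same route as the paper's proof: both pass to a complete representation, invoke the concrete identities $\bigcup(\{\theta(a)\}\compo\theta[S]) = \theta(a)\compo\bigcup\theta[S]$ (valid for arbitrary relations) and $\bigcap(\{\theta(a)\}\compo\theta[S]) = \theta(a)\compo\bigcap\theta[S]$ (valid precisely because $\theta(a)$ is a partial function), and conclude with the same upper-/lower-bound argument through the order-reflection property \eqref{represent-poset}, which simultaneously settles existence of the join/meet of $\{a\}\compo S$. One small slip in your final paragraph: functionality of $\theta(a)$ is needed for the inclusion $\bigcap_s(\theta(a)\compo\theta(s))\subseteq\theta(a)\compo\bigcap_s\theta(s)$, i.e.\ the \emph{reverse} of the direction you singled out (which is the sub-distributivity you yourself correctly note holds for all relations); the witness-uniqueness argument you sketch---the intermediate points $y_s$ must all equal $\theta(a)(x)$---is exactly the proof of the correct direction, so the mathematics is sound even though the two inclusions got swapped in the exposition.
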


\begin{proof}
First let $\theta$ be any join-complete representation of $\algebra{A}$. We prove that composition is completely left-distributive over joins. Let $S$ be a subset of $\algebra{A}$ such that $\join S$ exists and let $a \in \algebra{A}$. Suppose that for all $s \in S$ the element $b \in \algebra{A}$ satisfies $b \geq a \compo s$. Then for all $s \in S$ we have $\theta(b) \supseteq \theta(a \compo s)$. Hence
\begin{align*}
\theta(b) &\supseteq \bigcup \theta[\{a\} \compo S] \\
&= \bigcup (\{\theta(a)\} \compo \theta[S]) \\
&= \theta(a) \compo \bigcup \theta[S] \\
&= \theta(a \compo \join S)\text{.} 
\end{align*}
The second equality is a true property of any collection of functions, indeed of any collection of relations. We conclude that $b \geq a \compo \join S$, and hence $a \compo \join S$ is the least upper bound for $\{a\} \compo S$.

Now let $\theta$ be any meet-complete representation of $\algebra{A}$. We prove that composition is completely left-distributive over meets. Let $S$ be a nonempty subset of $\algebra{A}$ such that $\meet S$ exists and let $a \in \algebra{A}$. Suppose that for all $s \in S$, the element $b \in \algebra{A}$ satisfies $b \leq a \compo s$. Then for all $s \in S$, we have $\theta(b) \subseteq \theta(a \compo s)$. Hence
\begin{align*}
\theta(b) &\subseteq \bigcap \theta[\{a\} \compo S] \\
&= \bigcap (\{\theta(a)\} \compo \theta[S]) \\
&= \theta(a) \compo \bigcap \theta[S] \\
&= \theta(a \compo \meet S)\text{.} 
\end{align*}
This time the second equality holds only because we are working with functions. It is not, in general, a true property of relations. We conclude from the above that $b \leq a \compo \meet S$, and hence $a \compo \meet S$ is the greatest lower bound for $\{a\} \compo S$.
\end{proof}

\begin{proposition}
For every signature $\{\arest, \compo\} \subseteq \sigma \subseteq \{\arest, \compo, \intersect, \updatesymb, \pref, \D, \A\}$, there exist $\sigma$-algebras that are representable by partial functions and for which the atoms are separating but that have no atomic representation.
\end{proposition}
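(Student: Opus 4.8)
The plan is to reuse verbatim the concrete algebra $\algebra{F}$ constructed in \Cref{eg:noncdist}, now for every signature $\{\arest, \compo\} \subseteq \sigma \subseteq \{\arest, \compo, \intersect, \updatesymb, \pref, \D, \A\}$. Recall from that example that $\algebra{F}$ is closed under $\compo$, $\intersect$, $\pref$, and $\A$, and that each such $\sigma$ has all its operations term definable in $\{\compo, \intersect, \pref, \A\}$; hence $\algebra{F}$ is a legitimate $\sigma$-algebra of partial functions for every $\sigma$ in the stated range. In particular it is representable (the identity map on $\algebra{F}$ is a representation), and it was already observed in \Cref{eg:noncdist} that its atoms are separating. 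So the only thing left to establish is that $\algebra{F}$ admits no atomic representation.

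First I would argue by contradiction. Suppose $\theta$ were an atomic representation of $\algebra{F}$. By \Cref{prop:at-com} the representation $\theta$ is then locally complete, and by \Cref{lemma:local_join} every locally complete representation is join complete; hence $\theta$ is join complete, so $\algebra{F}$ is join-completely representable. Since $\{\arest, \compo\} \subseteq \sigma$, \Cref{lemma:compdist} now applies and tells us that composition in $\algebra{F}$ must be completely left-distributive over joins, i.e.\ $a \compo \join S = \join(\{a\} \compo S)$ whenever $\join S$ exists.

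The contradiction comes straight from the computation already carried out in \Cref{eg:noncdist}. Taking $g_i$ to be the restriction of the identity to $\{1,\ldots,i\}$, the join $\join_{i} g_i$ exists in the poset of $\algebra{F}$ and equals $\mathrm{id}_{\mathbb{N}_\infty}$, so that $f \compo \join_i g_i = f$; on the other hand each $f \compo g_i = \varnothing$, because $g_i$ is undefined at $\infty = f(p)$, whence $\join_i (f \compo g_i) = \varnothing$. As $f \neq \varnothing$, complete left-distributivity of composition over joins fails in $\algebra{F}$, contradicting the conclusion of the previous paragraph. Therefore no atomic representation of $\algebra{F}$ exists, which is exactly what the proposition asserts.

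I do not expect any genuine obstacle here, since all the substantive work has been front-loaded into \Cref{eg:noncdist} and into \Cref{lemma:compdist}. The only points requiring a little care are confirming that $\algebra{F}$ really is a $\sigma$-algebra across the whole range of signatures, and that $\join_i g_i$ is the abstract \emph{poset} supremum $\mathrm{id}_{\mathbb{N}_\infty}$ (which differs from the set-theoretic union $\mathrm{id}_{\mathbb{N}} \notin \algebra{F}$)\textemdash but both are precisely the content of \Cref{eg:noncdist}. One could equally run the argument through the meet side, using the witnesses $h_i$ and the meet half of \Cref{lemma:compdist} via meet-complete representations, but the join route through \Cref{lemma:local_join} is the most economical.
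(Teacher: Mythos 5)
Your proposal is correct and follows essentially the same route as the paper's own proof: the same algebra $\algebra{F}$ from \Cref{eg:noncdist}, with the same chain \Cref{prop:at-com} $\to$ \Cref{lemma:local_join} $\to$ \Cref{lemma:compdist} contradicting the failure of complete left-distributivity of $\compo$ over joins witnessed by $f$ and the $g_i$. The only cosmetic difference is that you phrase the argument as a contradiction where the paper runs the same implications in contrapositive form.
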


\begin{proof}
Let $\algebra{F}$ be the algebra of \Cref{eg:noncdist}, viewed as a $\sigma$-algebra. Since $\algebra{F}$ \emph{is} an algebra of partial functions, it is certainly representable by partial functions. We have already mentioned that the atoms of $\algebra{F}$ are separating. We have demonstrated that composition in $\algebra{F}$ is not completely left-distributive over joins. Hence, by \Cref{lemma:compdist}, $\algebra{F}$ has no join-complete representation. So, by \Cref{lemma:local_join}, $\algebra{F}$ has no locally complete representation. Then by \Cref{prop:at-com}, $\algebra{F}$ has no atomic representation.
\end{proof}

To make the discussion of distributive laws for composition comprehensive we finish by mentioning the one remaining case: right-distributivity of composition over meets. Here the weakest possible results, that the finite version of the law is valid for meet-completely representable algebras, does not hold for representation by partial functions.
It is not necessarily the case that for finite, nonempty $S$,
\begin{align*}%\label{law:lmeet}
\meet S \compo a = \meet(S \compo \{a\})\text{.} && \text{(composition is right-distributive over meets)}\end{align*}
 In the algebra of partial functions shown in \Cref{picture}, where sub-identity elements are omitted, we have
\[(f_1 \intersect f_2) \compo g = 0 \compo g = 0 \neq h = h \intersect h = (f_1 \compo g) \intersect (f_2 \compo g)\text{.}\]
The algebra is completely representable because it is already an algebra of partial functions (of, in particular, the signature $\{\arest,\compo,\intersect\}$) and it is finite.

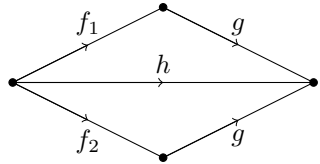
\begin{figure}[H]
\centering
\begin{tikzpicture}
\begin{scope}[->]
\draw (0,0)--(1,0.5);
\draw (0,0)--(1,-0.5);
\draw (0,0)--(2,0);
\draw (2,1)--(3,0.5);
\draw (2,-1)--(3,-0.5);
\end{scope}
\draw (0,0)--(2,1);
\draw (0,0)--(2,-1);
\draw (0,0)--(4,0);
\draw (2,1)--(4,0);
\draw (2,-1)--(4,0);
\draw[fill] (0,0) circle [radius=0.05];
\draw[fill] (4,0) circle [radius=0.05];
\draw[fill] (2,1) circle [radius=0.05];
\draw[fill] (2,-1) circle [radius=0.05];
\node [above] at (1,0.5) {$f_1$};
\node [above] at (3,0.5) {$g$};
\node [below] at (1,-0.5) {$f_2$};
\node [below] at (3,-0.5) {$g$};
\node [above] at (2,0) {$h$};
\end{tikzpicture}
\caption{An algebra refuting right-distributivity over meets}\label{picture}
\end{figure}

We summarise the validity of distributivity laws for composition in \Cref{table:distributivity} and \Cref{table:distributivity2}. In the tables, a cardinal $\kappa$ indicates that $\kappa$ is the least (nonzero) cardinality of a subset for which the law can fail. An $\infty$ indicates that the law holds for arbitrary cardinality subsets.

\begin{table}
\begin{center}
\begin{tabular}{ |l|c|c|c| } 
 \hline
  & representable & join-completely  & meet-completely  \\ 
  \hline
 right-distributive over joins & $\infty$ & $\infty$ & $\infty$\\ 
 left-distributive over joins & $\aleph_0$ & $\infty$ & $\infty$ \\ 
 right-distributive over meets & 2 & 2 & 2 \\ 
 left-distributive over meets & 2 & 2 & $\infty$\\ 
 \hline
\end{tabular}
\vspace{.2cm}
\caption{Validity of distributive laws for composition for $\{\arest, \compo\}$-algebras}\label{table:distributivity}
\end{center}
\end{table}

\begin{table}
\begin{center}
\begin{tabular}{ |l|c|c| } 
 \hline
  & representable &  completely representable \\ 
  \hline
 right-distributive over joins & $\infty$ &  $\infty$\\ 
 left-distributive over joins & $\aleph_0$ &  $\infty$ \\ 
 right-distributive over meets & 2 &  2 \\ 
 left-distributive over meets & $\aleph_0$ &  $\infty$\\ 
 \hline
\end{tabular}
\vspace{.2cm}
\caption{Validity of distributive laws for composition for $\{\arest, \compo,\intersect\}$-algebras}\label{table:distributivity2}
\end{center}
\end{table}

We have not yet mentioned distributive laws for operations other than composition. There is one other operation whose left-distributivity we will need to know to prove the representation theorem of the following section: domain restriction. For left-distributivity over \emph{joins}, the atoms being separating is sufficient to give us the unrestricted cardinality law.

\begin{lemma}\label{lemma:left_dist_rest}
Let $\sigma$ be a signature containing $\arest$, and let $\algebra{A}$ be a $\sigma$-algebra that is representable by partial functions and whose atoms are separating. Then $\rest$ is completely left-distributive over joins.
\end{lemma}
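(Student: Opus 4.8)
The plan is to show that $a \rest \join S$ is the least upper bound of the set $\{a\} \rest S = \{a \rest s \mid s \in S\}$; since ``completely left-distributive over joins'' asserts exactly $a \rest \join S = \join(\{a\} \rest S)$, this suffices. As $\algebra{A}$ is representable, I would work throughout with partial functions, so that $\leq$ is $\subseteq$ and I may reason set-theoretically. The easy half is that $a \rest \join S$ is an upper bound: for each $s \in S$ we have $s \leq \join S$, and since $\rest$ is monotone in its right argument (if $s \subseteq t$ then $a \rest s \subseteq a \rest t$), we get $a \rest s \leq a \rest \join S$.

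The substance is showing that $a \rest \join S$ is the \emph{least} upper bound. I would take an arbitrary upper bound $b$, so $a \rest s \leq b$ for every $s \in S$, and suppose towards a contradiction that $a \rest \join S \not\leq b$. Here I invoke the hypothesis that the atoms are separating: it yields an atom $c$ with $c \leq a \rest \join S$ and $c \not\leq b$. The goal is then to locate a single $s \in S$ with $c \leq a \rest s$, for this immediately gives $c \leq b$, contradicting $c \not\leq b$. Reading $c \leq a \rest \join S$ at the level of partial functions gives both $\dom(c) \subseteq \dom(a)$ and $c \subseteq \join S$; so once I produce $s \in S$ with $c \leq s$, the domain condition $\dom(c) \subseteq \dom(a)$ upgrades this to $c \leq a \rest s$.

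Thus the key step, which I expect to be the main obstacle, is the claim that an atom $c$ with $c \leq \join S$ must satisfy $c \leq s$ for some $s \in S$. This is precisely a fragment of join-completeness, and the analogous statement can fail for composition (see \Cref{eg:noncdist}); the point is that for $\rest$, equivalently in the Boolean setting, it does hold. To prove it I would set $u = \join S$ and pass to the Boolean algebra $\down u$ supplied by \Cref{lemma:boolean}, whose meet is $\rest$ and whose complement is $\compl{x} = x \arest u$. First note $u = \join_{\down u} S$, because every upper bound of $S$ in $\algebra{A}$ lies above $u$, so the only upper bound of $S$ inside $\down u$ is $u$ itself. Now suppose $c \not\leq s$ for every $s \in S$. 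Since $c$ is an atom and $c \rest s \leq c$ in $\down u$, we get $c \rest s = 0$, equivalently $s \leq \compl{c}$, for every $s$; hence $\compl{c} = c \arest u$ is an upper bound of $S$ lying in $\down u$. As $c \neq 0$ we have $\compl{c} \neq u$, so $\compl{c}$ is an upper bound of $S$ strictly below $u$ inside $\down u$, contradicting $u = \join_{\down u} S$. This establishes the claim and completes the argument, with the separating-atoms hypothesis used exactly once, to pass from the order relation $a \rest \join S \not\leq b$ to a witnessing atom.
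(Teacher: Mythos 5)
Your proof is correct and takes essentially the same route as the paper's: both reduce the least-upper-bound step to producing, via the separating-atoms hypothesis, an atom below $a \rest \join S$ but not below the candidate upper bound $b$, and both derive the contradiction inside the Boolean algebra $\down \join S$ of \Cref{lemma:boolean}, where an atom below $\join S$ must lie below some $s \in S$ (precisely the complement argument you spell out). The only differences are cosmetic: the paper applies separation to $a \rest \join S \not\leq (a \rest \join S) \rest b$ and states the key claim in contrapositive form ($x \not\leq s$ for all $s \in S$), whereas you separate directly against $b$ and upgrade $c \leq s$ to $c \leq a \rest s$ by set-theoretic reasoning about domains.
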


\begin{proof}
Let $S$ be a subset of $\algebra{A}$ such that $\join S$ exists, and let $a \in \algebra{A}$. It is clear that for each $s \in S$ we have $a \rest s \leq a \rest \join S$. Hence $a \rest \join S$ is an upper bound for $\{a\} \rest S$. Now let $b$ be an arbitrary upper bound for $\{a\} \rest S$. We want to show that $a \rest \join S \leq b$. Suppose not; then $a \rest \join S \not\leq (a \rest \join S) \rest b$. Then as the atoms are separating, there exists an atom $x$ with $x \leq a \rest \join S$ and $x \not\leq (a \rest \join S) \rest b$. We now claim that for each $s \in S$, we have $x \not\leq s$. Suppose otherwise; then we have $a \rest x \leq a \rest s \leq b$. Hence $(a \rest \join S) \rest (a \rest x) \leq (a \rest \join S) \rest b$. Since $x \leq a \rest \join S$, we also know, by properties of partial functions, that $(a \rest \join S) \rest (a \rest x)  = x$. Hence $x \leq (a \rest \join S) \rest b$, contradicting the second hypothesis about $x$. This proves the claim. We also know, from $x \leq a \rest \join S$, that $x \leq \join S$. Now in the Boolean algebra $\down \join S$, we have the atom $x$ with $x \leq \join S$ and $\forall s \in S: x \not\leq s$, which is a contradiction. We conclude that $a \rest \join S \leq b$, and thus $a \rest \join S$ is the least upper bound for $\{a\} \rest S$. %By reasoning about partial functions, one can see that for each $s \in S$:
%\[(b \arest (a \rest \join S)) \arest \join S \geq s.\]
%Thus 
%\[(b \arest (a \rest \join S)) \arest \join S \geq \join S.\]
%It follows that $b \arest (a \rest \join S)=0$. %, which, given $a \rest \join S \geq b$, is sufficient to conclude that $a \rest \join S = \join (a \rest S)$.
\end{proof}

The hypothesis that the atoms are separating cannot be removed from \Cref{lemma:left_dist_rest}. We outline an example demonstrating this and leave it as an exercise to the reader to check the details. Take the algebra from \Cref{example1} and remove all partial functions that extend $\mathrm{id}_{\{1,\infty_1, \infty_2\}}$. The remaining functions are closed under antidomain restriction, so form a representable $\{\arest\}$-algebra. Let $S$ be $\{\mathrm{id}_A \mid A\text{ is a finite subset of }\mathbb{N}\}$ and $a$ be any element of the algebra whose domain is an infinite set not containing $1$. Then $\join S$ exists, but $\join (\{a\} \rest S)$ does not (as its two upper bounds are incomparable).

 For left-distributivity over \emph{meets}, we only need that the algebra be representable.

\begin{lemma}\label{lemma:left_dist_rest_meets}
Let $\sigma$ be a signature containing $\arest$, and let $\algebra{A}$ be a $\sigma$-algebra that is representable by partial functions. Then $\rest$ is completely left-distributive over meets.
\end{lemma}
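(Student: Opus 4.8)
The plan is to prove that $\rest$ is completely left-distributive over meets by working directly inside a representation, exploiting the fact that $\rest$ acts on partial functions in a very transparent way: it simply restricts the second argument to the domain of the first. Since $\algebra{A}$ is representable, I may assume its elements \emph{are} partial functions and that $\leq$ is set inclusion. Let $S$ be a nonempty subset of $\algebra{A}$ such that $\meet S$ exists, and let $a \in \algebra{A}$. The goal is to show $a \rest \meet S$ is the greatest lower bound of $\{a\} \rest S$.

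First I would establish that $a \rest \meet S$ is a lower bound: for each $s \in S$ we have $\meet S \leq s$, and since $\rest$ is order-preserving in its first argument (monotonicity of restriction, which follows from properties of partial functions), $a \rest \meet S \leq a \rest s$. Hence $a \rest \meet S$ is a lower bound for $\{a\} \rest S$. The substantive direction is to show it is the \emph{greatest} lower bound. Let $b$ be any lower bound for $\{a\} \rest S$, so $b \leq a \rest s$ for every $s \in S$; I must show $b \leq a \rest \meet S$.

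The key point, and where functionality does the work, is the following. Since $b \leq a \rest s$, every pair $(x,y) \in b$ lies in $a$ and has $x \in \dom(s)$; in particular $\dom(b) \subseteq \dom(s)$ for every $s \in S$. The idea is to transfer $b$ back along $a$ into the down-set Boolean algebra where $\meet S$ can be computed. Concretely, I would consider $\D(b)$-type information: because $a$ is a function, the domain element $x$ of any $(x,y) \in b$ determines $y$ as $a(x)$, so $b$ is recoverable from its domain together with $a$. I claim $\dom(b) \subseteq \dom(\meet S)$, equivalently that the sub-identity element capturing $\dom(b)$ lies below every $s$ and hence below $\meet S$. The main obstacle is precisely justifying $\dom(b) \subseteq \dom(\meet S)$ purely order-theoretically, since $\meet S$ is an infimum in the abstract algebra, not literally $\bigcap_{s} s$; I would handle this by forming, inside the Boolean algebra $\down a$ (available by \Cref{lemma:boolean}), the element $c = b \rest a$ encoding $\dom(b)$, noting $c \leq a \rest s$ gives $c \leq s$ after restricting, so $c$ is a lower bound for $S$ and thus $c \leq \meet S$; then $b = c \rest b \leq \meet S \rest a = \dots$ must be shown to land below $a \rest \meet S$.

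More carefully, the cleanest route is: set $d \coloneqq (\meet S) \rest a$, which is the restriction of $a$ to the domain of $\meet S$, so $d = a \rest \meet S$ by functionality of $a$ and the definition of $\rest$. Since for each $s$ we have $b \leq a \rest s$ and $a \rest s$ is the restriction of $a$ to $\dom(s)$, the element $b$ restricted to recover its domain gives a sub-identity $\D(b)$ below each $\D(s)$, hence $\D(b) \leq \meet_s \D(s)$; the remaining step is to argue this meet of domains matches the domain of $\meet S$, which again follows because within the single function $a$ the assignment $x \mapsto a(x)$ is injective on information, so $\meet S$ and $a \rest \meet S$ have the same domain below $a$. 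Reassembling, $b = \D(b) \rest a \leq (\meet S) \rest a = a \rest \meet S$, completing the proof. I expect the genuine difficulty to be phrasing the domain-tracking argument so that it uses only operations present in $\sigma$ (namely $\arest$ and the derived $\rest$) rather than $\D$ or $\intersect$, which need not be available; the escape is to reason in the representation and then pull the resulting inequality back through the isomorphism using \eqref{represent-poset}.
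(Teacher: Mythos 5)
There is a genuine gap, and it stems from reading the operation $\rest$ backwards. In this paper $f \rest g = \{(x,y) \mid x \in \dom(f),\ (x,y) \in g\}$: it restricts the \emph{second} argument to the domain of the first. So from $b \leq a \rest s$ you get $b \subseteq s$ and $\dom(b) \subseteq \dom(a)$ --- not, as you assert, that every pair of $b$ lies in $a$ with $\dom(b) \subseteq \dom(s)$. This mis-reading propagates through your argument: the claimed identity $(\meet S) \rest a = a \rest \meet S$ is false in general (the left side consists of pairs of $a$, the right side of pairs of $\meet S$, and these differ wherever the two functions disagree on a common domain point --- ``functionality of $a$'' does not rescue this); likewise $b = \D(b) \rest a$ fails, since $b$ agrees with each $s$ on its domain, not with $a$. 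Moreover, even granting your conventions, the domain-tracking strategy hinges on the step $\meet_{s \in S} \D(s) = \D(\meet S)$, which is false for partial functions: in \Cref{example:three-element} the identity and the constant function both have full domain, so the meet of their domains is the full identity, yet their meet is $0$, whose domain is empty. The paper's proof deliberately avoids computing domains of meets for exactly this reason; your ``injective on information'' remark does not address the problem, because $\meet S$ is an infimum of the $s$'s, not of restrictions of $a$.

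The correct argument (the paper's) is much shorter and purely order-theoretic, and the observation that unlocks it is the one your reversed convention hides: $a \rest s \leq s$, since $a \rest s$ is a restriction of $s$. Hence any lower bound $b$ of $\{a\} \rest S$ satisfies $b \leq a \rest s \leq s$ for every $s \in S$, so $b \leq \meet S$ directly from the definition of infimum, with no domain bookkeeping at all. Monotonicity of $\rest$ in its second argument then gives $a \rest b \leq a \rest \meet S$, and finally $a \rest b = b$ because $b \leq a \rest s$ for some $s$ (here nonemptiness of $S$ is used) forces $\dom(b) \subseteq \dom(a)$. Your lower-bound half is fine and matches the paper, but the greatest-lower-bound half needs to be replaced by this three-line argument rather than repaired.
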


\begin{proof}
Let $S$ be a nonempty subset of $\algebra{A}$ such that $\meet S$ exists, and let $a \in \algebra{A}$. It is clear that for each $s \in S$ we have $a \rest \meet S \leq a \rest s$. Hence $a \rest \meet S$ is a lower bound for $\{a\} \rest S$. Now let $b$ be an arbitrary lower bound for $\{a\} \rest S$. Then for each $s \in S$ we have $b \leq a \rest s \leq s$. Hence $b \leq \meet S$. Then clearly $a \rest b \leq a \rest \meet S$. But since $S$ is nonempty, we have $b \leq a \rest s$ for some $s$, from which it is clear that $a \rest b = b$. We conclude that $b \leq a \rest \meet S$, and hence $a \rest \meet S$ is the greatest lower bound for $\{a\} \rest S$.
\end{proof}

\section{A representation}\label{arep}

We have seen that for an algebra of a signature $\sigma$ that includes $\{\arest, \compo\}$ to be \emph{meet}-completely representable by partial functions it is necessary for it to be representable by partial functions and atomic and for composition to be completely left-distributive over meets. In this section, we show that when $\{\arest, \compo\} \subseteq \sigma \subseteq \{\arest, \compo, \intersect, \updatesymb, \pref, \D, \A\}$, these conditions are also sufficient. The representations used for the proof are almost independent of the signature: we use one of two similar representations depending on whether or not $\D$ is expressible. These are Cayley-style representations but also have a certain similarity to the Birkhoff--Stone representation, for the representations use atoms for their base, and atoms correspond to principal ultrafilters in Boolean algebras.

We first need to define the relation corresponding to `have the same domain'. For an algebra $\algebra A$ of a signature containing $\arest$, define
\[a \sim b \iff (a \rest b = b\text{ and }b \rest a = a).\]
If $\algebra A$ is representable by partial functions, then $\sim$ is an equivalence relation corresponding to being represented by functions with the same domain.

\begin{proposition}\label{prop:rep}
Let $\{\arest, \compo\} \subseteq \sigma \subseteq \{\arest, \compo, \intersect, \updatesymb, \pref, \D, \A\}$, and let $\algebra{A}$ be a $\sigma$-algebra. Suppose $\algebra{A}$ is representable by partial functions and the atoms of $\algebra A$ are separating. %Define $* \coloneqq \At(\algebra A)$, so that $\{*\}$ is disjoint from $\At(\algebra A)$, and for each $a \in \algebra A$, define $* \compo a = a$.
 If $\D, \A \not\in \sigma$, for each $a \in \algebra{A}$, let $\theta(a)$ be the following partial function on the disjoint union $\At(\algebra{A}) \amalg \At(\algebra A) / {\sim}$. For $x \in \At(\algebra A)$
\[\theta(a)(x) =
\begin{cases}
x \compo a & \mathrm{if }\text{ }x \compo a \neq 0 \\
\mathrm{undefined} & \mathrm{otherwise}
\end{cases}\]
and 
\[\theta(a)(x/{\sim}) =
\begin{cases}
x \rest a & \mathrm{if }\text{ }x \rest a \neq 0 \\
\mathrm{undefined} & \mathrm{otherwise}
\end{cases}\]
If $\D \in \sigma$ or $\A \in \sigma$,  for each $a \in \algebra{A}$, let $\theta$ be only the first component of the partial function just defined, so $\theta(a)$ is a partial function on  $\At(\algebra{A})$.
Then $\theta$ is a representation of $\algebra{A}$ by partial functions, with base either $\At(\algebra{A}) \amalg \At(\algebra A) / {\sim}$ or $\At(\algebra{A})$ as appropriate.

Further,
\begin{enumerate}
\item\label{claim1}
 if, in $\algebra A$, composition is completely left-distributive over joins, then $\theta$ is join complete;
 \item\label{claim2}
 if, in $\algebra A$, composition is completely left-distributive over meets, then $\theta$ is meet complete. 
 \end{enumerate}
\end{proposition}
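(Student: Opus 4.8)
The plan is to reduce the whole proposition to a single structural fact about atoms, and then to check well-definedness, the homomorphism property, faithfulness, and the two completeness claims in turn. Throughout I would use representability to treat the elements of $\algebra A$ as genuine partial functions, so that set-theoretic reasoning about $\compo$, $\rest$ and $\arest$ is available. The key lemma I would establish first is an \emph{all-or-nothing} property of atoms: for any atom $x$ and any $a \in \algebra A$, each of $x \compo a$ and $x \rest a$ is either $0$ or again an atom, and if $x \compo a \neq 0$ then $\dom(x \compo a) = \dom(x)$ (equivalently $\ran(x) \subseteq \dom(a)$), while if $x \rest a \neq 0$ then $\dom(x \rest a) = \dom(x)$ (equivalently $\dom(x) \subseteq \dom(a)$). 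The proof is short: if $x \compo a \neq 0$ then $(x \compo a) \rest x$ is a nonzero element below the atom $x$, hence equals $x$, which forces $\dom(x \compo a) = \dom(x)$; an identical argument handles $x \rest a$. I would also record that when $\D \in \sigma$ (or $\A \in \sigma$, so that $\D = \A^2$ is available), $\D(x)$ is itself an atom and $\D(x) \compo a = x \rest a$; this is exactly why the second base component may be dropped when $\D$ is expressible, its points $x/{\sim}$ being stand-ins for the otherwise unavailable idempotents $\D(x)$.

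Granting the structural lemma, well-definedness is immediate: every value $x \compo a$ and $x \rest a$ is an atom, so $\theta(a)$ is a partial function into the base, and the second component is constant on $\sim$-classes because $x \sim x'$ makes $x \rest a$ and $x' \rest a$ equal (same domain). The bulk of the work is verifying that $\theta$ preserves every operation of $\sigma$. For $\compo$ this rests on associativity, $x \compo (a \compo b) = (x \compo a) \compo b$, together with the identity $x \rest (a \compo b) = (x \rest a) \compo b$ on the $\sim$-component; here I would stress that the $\sim$-component maps into the atom component (via $x/{\sim} \mapsto x \rest a$), so a subsequent application of $\theta(b)$ composes on the right exactly as required. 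For $\arest$, $\intersect$, $\updatesymb$ and $\pref$ I would argue pointwise using the dichotomy from the structural lemma: at an atom $x$ either $\ran(x) \subseteq \dom(a)$ or $\ran(x) \cap \dom(a) = \varnothing$, so left-composition distributes over each operation's defining case split (and, on the $\sim$-component, left-restriction does likewise, using $\dom(x) \subseteq \dom(a)$ or $\dom(x) \cap \dom(a) = \varnothing$). For $\D$ and $\A$ the same dichotomy gives $\theta(\D(a))(x) = x$ precisely when $x \compo a \neq 0$, matching $\D(\theta(a))$, and dually for $\A$.

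Faithfulness uses that the atoms are separating: given $a \neq b$, say $a \not\leq b$, choose an atom $x \leq a$ with $x \not\leq b$. Then $x \rest a = x$ while $x \rest b \neq x$ (it is $0$ or an atom distinct from $x$), so $\theta(a)$ and $\theta(b)$ disagree at the base point $x/{\sim}$ — or at the atom $\D(x)$ in the case $\D, \A \in \sigma$. For claim~\eqref{claim1}, the inclusion $\bigcup \theta[S] \subseteq \theta(\join S)$ is monotonicity; for the reverse, a pair $(x, q) \in \theta(\join S)$ has $q = x \compo \join S$, an atom, and complete left-distributivity of $\compo$ over joins gives $q = \join\{x \compo s \mid s \in S\}$. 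Since each $x \compo s \leq q$ lies in $\{0, q\}$ and the join is nonzero, some $x \compo s = q$, i.e. $(x, q) \in \theta(s)$; on the $\sim$-component the same runs with $\rest$ in place of $\compo$, invoking \Cref{lemma:left_dist_rest} (which needs the atoms separating). Claim~\eqref{claim2} is dual: $\bigcap \theta[S] \subseteq \theta(\meet S)$ because a pair in the intersection yields a constant family $x \compo s \equiv q$, whence complete left-distributivity over meets gives $x \compo \meet S = \meet\{q\} = q$; the $\sim$-component uses \Cref{lemma:left_dist_rest_meets}.

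The main obstacle is the combination of the structural lemma and the operation-by-operation homomorphism check; everything after that is bookkeeping driven by atom minimality and the distributivity hypotheses. The delicate point is that correctness of the construction hinges entirely on the all-or-nothing behaviour of atoms: without it, left-composition (and left-restriction) would fail to distribute over $\arest$, $\pref$ and $\updatesymb$, and $\theta$ would not be a homomorphism. A secondary care point is to carry the two shapes of base point — atoms and $\sim$-classes — through every verification in parallel, while keeping in mind that the $\sim$-component exists solely to supply the idempotents $\D(x)$ when neither $\D$ nor $\A$ lies in the signature.
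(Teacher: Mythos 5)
Your proposal is correct and takes essentially the same route as the paper's own proof: the well-definedness and operation-by-operation checks rest on the same all-or-nothing behaviour of atoms (which the paper derives inline, e.g.\ via $(x \compo a) \rest x = x$ and the four-case analysis for $\pref$, rather than as an up-front structural lemma), faithfulness uses the same witnesses $x/{\sim}$ or the atom $\D(x)$, and both completeness claims combine left-distributivity of $\compo$ with \Cref{lemma:left_dist_rest} and \Cref{lemma:left_dist_rest_meets} exactly as the paper does. The only deviations are organisational, plus one harmless slip: in your faithfulness paragraph, ``the case $\D, \A \in \sigma$'' should read ``the case $\D \in \sigma$ or $\A \in \sigma$''.
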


\begin{proof}
Since $\algebra A$ is representable, we may assume it is an algebra of partial functions. 

\smallskip

\noindent\underline{{Well defined}} \,We first need to show that, for each $a \in \algebra{A}$, the partial function $\theta(a)$ is well defined and maps into $\At(\algebra{A}) \amalg \At(\algebra A) / {\sim}$  or $\At(\algebra{A})$ (in fact it always maps into $\At(\algebra{A})$). 

First let $x$ be an atom and $b \in \algebra{A}$, and suppose $b \leq x \compo a$. We must show that $b =0$ or $b= x \compo a$.  As $b \leq x \compo a$, we know $\dom(b) \subseteq \dom(x \compo a) \subseteq \dom(x)$. Since $x$ is an atom, $b \rest x = 0$ or $b \rest x=x$. Knowing that $\dom(b) \subseteq \dom(x)$, we deduce that $b = 0$ or $\dom(b)=\dom(x)$, respectively. In the first case, we are done; in the second, we deduce that $\dom(b) = \dom(x \compo a)$, which, together with $b \leq x \compo a$, yields $b = x \compo a$.

Now let $x$ and $y$ be atoms with $x \sim y$. Then $\dom(x) = \dom(y)$ so it is clear that $x \rest a = y \rest a$. Hence $\theta$ is well defined. We must also show that $x \rest a$ is an atom, that is, given $b \leq x \rest a$, either $b = 0$ or $b = x \rest a$. As before, $\dom(b) \subseteq \dom(x)$ and either $b \rest x = 0$ or $b \rest x = x$, yielding $b = 0 $ or $\dom(b)  = \dom(x)$, respectively. The second case, together with $b \leq x \rest a \leq a$, gives $b = x \rest a$.

\smallskip

\noindent\underline{Injective} \,To show that $\theta$ is injective, let $a$ and $b$ be distinct elements of $\algebra A$. Then without loss of generality, $a \nleq b$. Take an atom $x$ with $x \leq a$ but $x \nleq b$, which exists because the atoms of $\algebra A$ are separating. There are two cases to consider, depending on the signature. If $\D, \A \not \in \sigma$, then by the definition of $\leq$, we have $x \rest a = x \neq x \rest b$, and thus the partial functions $\theta(a)$ and $\theta(b)$ must differ. If $\D \in \sigma$ or $\A \in \sigma$, then from $x \neq 0$ it follows that $\D(x) \neq 0$. In fact, $\D(x)$ must also be an atom, for if $\alpha \leq \D(x)$, then $\alpha \compo x \leq x$, and thus $\alpha \compo x = 0$ or $\alpha \compo x = x$. Since $\alpha$ is a restriction of $\D(x)$ these possibilities give $\alpha = 0$ or $\alpha = \D(x)$ respectively. We now calculate $\theta(a)$ and $\theta(b)$ on $\D(x)$. Since $x \leq a$, we have $\D(x) \compo a = x \rest a = x$, so $\theta(a)(\D(x)) = x$.  Since $x \leq b$, we have $\D(x) \compo b = x \rest b \neq x$, so $\theta(b)(\D(x)) \neq x$, if $\theta(b)(\D(x))$ is defined at all.

\smallskip

\noindent\underline{$\arest$} \,To show that $\theta$ represents $\arest$ correctly, let $a, b \in \algebra{A}$ and $x \in \At(\algebra{A})$. We first check that $\theta(a \arest b)$ and $\theta(a) \arest \theta(b)$ have the same domain. If $\theta(a \arest b)(x)$ is defined, then $x \compo (a \arest b) \neq 0$, so $x \compo b \neq 0$, and thus $\theta(b)(x)$ is defined.  By reasoning about partial functions, we see that $x \compo (a \arest b) \neq 0$ implies $(x \compo a) \arest x \neq 0$, which as $x$ is an atom implies $(x \compo a) \arest x = x$. Again reasoning about partial functions, this implies $x \compo a = 0$, that is, $\theta(a)(x)$ is undefined. Conversely, if $\theta(b)(x)$ is defined and $\theta(a)(x)$ not, that is $x \compo b \neq 0$ and $x \compo a = 0$, then by reasoning about partial functions, $x \compo (a \arest b) \neq 0$, so $\theta(a \arest b)(x)$ is defined. Verifying that $\theta(a \arest b)(x/{\sim})$ is defined if and only if $\theta(a) \arest \theta(b)(x/{\sim})$ is defined is similar. Now to see that the values of $\theta(a \arest b)$ and $\theta(a) \arest \theta(b)$ never differ, note first that if $x \compo (a \arest b)$ and $x \compo b$ are nonzero, then they are equal since  $x \compo b$ is an atom and $x \compo (a \arest b) \leq x \compo b$. Similarly for $x \rest (a \arest b)$ and $x \rest b$.%Further, as $x \compo (a \arest b) \leq x \compo b$, and we now know the latter is an atom, we deduce that $x \compo (a \arest b) = x \compo b$. 

\smallskip

\noindent\underline{$\,\compo\,$} \,To show that $\theta$ represents $\compo$ correctly, let $a, b \in \algebra{A}$ and $x \in \At(\algebra{A})$. Then clearly $\theta(a \compo b)(x) = \theta(a) \compo \theta(b)(x)$ if both sides are defined. The left-hand side is defined precisely when $x \compo a \compo b$ is nonzero and the right-hand side when $x \compo a$ and $x \compo a \compo b$ are both nonzero. Since $x \compo a \compo b \neq 0$ implies $x \compo a \neq 0$, these conditions coincide. We also need to check the agreement of $\theta(a \compo b)(x/{\sim})$ and $\theta(a) \compo \theta(b)(x/{\sim})$. When both are defined, they are $x \rest(a\compo b)$ and $(x \rest a) \compo b$ respectively; these are clearly equal in any algebra of partial functions. Noting that $x \rest a \compo b \neq 0$ implies $x \rest a \neq 0$ is sufficient to see that the conditions for $\theta(a \compo b)(x/{\sim})$ and $\theta(a) \compo \theta(b)(x/{\sim})$ to be defined coincide.

\smallskip

We now show that each of the additional operations are represented correctly when they are in the signature.

\smallskip

\noindent\underline{$\intersect$} \,To show that $\theta$ represents $\intersect$ correctly, let $a, b \in \algebra{A}$ and $x, y \in \At(\algebra{A})$. Then
\begin{align*}
&(x, y) \in \theta(a \intersect b) \\
\implies \quad & (x, y) \in \theta(a)\text{ and }(x, y) \in \theta(b) & \text{as }a, b \geq a \intersect b \\
\implies \quad & (x, y) \in \theta(a) \intersect \theta(b) 
\shortintertext{and}
& (x, y) \in \theta(a) \intersect \theta(b) \\
\implies \quad & x \compo a = y\text{ and }x \compo b = y \\
\implies \quad & (x \compo a) \intersect (x \compo b) = y \\
\implies \quad & x \compo (a \intersect b) = y &\text{ by \Cref{dist-laws}}\\
\implies \quad & (x, y) \in \theta(a \intersect b)\text{.}
\end{align*}
The proof that $(x/{\sim}, y) \in \theta(a \intersect b) \iff (x/{\sim}, y) \in \theta(a )\intersect\theta( b)$ is similar, using for the right-to-left implication that $(x \rest a) \intersect(x \rest b) = x \rest (a \intersect b)$ is valid for partial functions.
\smallskip

\noindent\underline{$\updatesymb$} \,To show that $\updatesymb$ is represented correctly, let $a, b \in \algebra{A}$ and $x\in \At(\algebra{A})$. %Assume, without loss of generality, that $\algebra A$ is an algebra of partial functions. 

We first argue that $\theta(\update a b)$ and $ \update{\theta(a)} {\theta(b)}$ agree on $\At(\algebra A)$. Now $\theta(a)$ is defined on $x$ if and only if there is a pair in $x \compo a$. It is easy to see that this happens if and only if there is a pair in $x \compo \update a b$, which is the condition for $\theta(\update a b)$ to be defined on $x$. Hence $\theta(\update a b)$ has the same domain (on $\At(\algebra A)$) as $\theta(a)$ and hence the same domain as $ \update{\theta(a)} {\theta(b)}$. Now suppose $\theta(a)$ is defined on $x$. If $\theta(b)$ is not defined on $x$ then every pair in $x \compo a$ is also in $x \compo \update a b$. This means $x \compo a \leq x \compo \update a b$, then since both are atoms $x \compo a = x \compo \update a b$. So in this case $\theta(a)$ and $\theta(\update a b)$ agree. Hence $ \update{\theta(a)} {\theta(b)}$ and $\theta(\update a b)$ agree. Alternatively, if $\theta(b)$ \emph{is} defined on $x$, then as $x$ is an atom $(x \compo a) \rest x = x = (x\compo b) \rest x$. That is, every point in the image of $x$ is in the domain of both $a$ and $b$. This is sufficient to conclude that $x \compo b \leq x \compo \update a b$, then since both are atoms $x \compo b = x \compo \update a b$. So in this case $\theta(b)$ and $\theta(\update a b)$ agree. Hence $ \update{\theta(a)} {\theta(b)}$ and $\theta(\update a b)$ agree.

We now argue that $\theta(\update a b)$ and $ \update{\theta(a)} {\theta(b)}$ agree on $\At(\algebra A) / {\sim}$ (when appropriate). Similarly to before, it is easy to see that there is a pair in $x \rest a$ if and only if there is a pair in $x \rest \update a b$, and hence $\theta(\update a b)$ and $ \update{\theta(a)} {\theta(b)}$ have the same domains (on $\At(\algebra A) / {\sim}$). Now suppose $\theta(a)$ is defined on $x/{\sim}$. If $\theta(b)$ is not defined on $x/{\sim}$ then every pair in $x \rest a$ is also in $x \rest \update a b$. This means $x \rest a \leq x \rest \update a b$, then since both are atoms $x \rest a = x \rest \update a b$. So in this case $\theta(a)$ and $\theta(\update a b)$ agree. Hence $ \update{\theta(a)} {\theta(b)}$ and $\theta(\update a b)$ agree. Alternatively, if $\theta(b)$ \emph{is} defined on $x/{\sim}$, then as $x$ is an atom $a \rest x = x = b \rest x$. That is, every point in the domain of $x$ is in the domain of both $a$ and $b$. This is sufficient to conclude that $x \rest b \leq x \rest \update a b$, then since both are atoms $x \rest b = x \rest \update a b$. So in this case $\theta(b)$ and $\theta(\update a b)$ agree. Hence $ \update{\theta(a)} {\theta(b)}$ and $\theta(\update a b)$ agree.

\smallskip
\noindent\underline{$\pref$} \,To show that $\pref$ is represented correctly, let $a, b \in \algebra{A}$ and $x\in \At(\algebra{A})$. %Assume, without loss of generality, that $\algebra A$ is an algebra of partial functions. 

We first argue that $\theta( a \pref b)$ and ${\theta(a)}\pref {\theta(b)}$ agree on $\At(\algebra A)$. We know that there are four possibilities: \begin{itemize}
\item
for every point in the image of $x$, both $a$ and $b$ are defined;
\item
for every point in the image of $x$, the function $a$ is defined but $b$ is not;
\item
for every point in the image of $x$, the function $a$ is undefined, but $b$ is defined;
\item
for every point in the image of $x$, neither $a$ nor $b$ are defined;
\end{itemize}
(and we know there is at least one point in the image of $x$).
In the first two cases, $a \pref b$ is defined for every point in the image of $x$ and agrees with $a$. It follows that $x \compo (a \pref b) = x \compo a$ (and these are nonzero). In the third case $a \pref b$ is defined for every point in the image of $x$ and agrees with $b$. It follows that $x \compo (a \pref b) = x \compo b$ (and these are nonzero, and in this case $x \compo a = 0$). In the fourth case $x \compo a = x \compo b = x \compo (a \pref b) = 0$. In each case we can see that $\theta( a \pref b)$ and ${\theta(a)}\pref {\theta(b)}$ agree on $x$.

The proof that $\theta( a \pref b)$ and $ {\theta(a)} \pref {\theta(b)}$ agree on $\At(\algebra A) / {\sim}$ (when appropriate), is by an identical case analysis.

\smallskip

\noindent\underline{$\D$} \,To show that $\D$ is represented correctly, let $a \in \algebra{A}$ and $x \in \At(\algebra{A})$. Then $0 < \theta(\D(a))(x) = x \compo \D(a) \leq x$ if $\theta(\D(a))(x)$ is defined. Since $x$ is an atom we have, in this case, $\theta(\D(a))(x) = x$. The partial function $\D(\theta(a))$ is also, by definition, a restriction of the identity function. The domains of $\theta(\D(a))$ and $\D(\theta(a))$ are the same, since $\theta(\D(a))(x)$ is defined precisely when $x \compo \D(a) \neq 0$, which is when $x \compo a \neq 0$, which is precisely when $\D(\theta(a))(x)$ is defined. 

\smallskip

\noindent\underline{$\A$} \,To show that $\A$ is represented correctly, let $a \in \algebra{A}$ and $x \in \At(\algebra{A})$. Then $0 < \theta(\A(a))(x) = x \compo \A(a) \leq x$ if $\theta(\A(a))(x)$ is defined. Since $x$ is an atom we have, in this case, $\theta(\A(a))(x) = x$. The partial function $\A(\theta(a))$ is also, by definition, a restriction of the identity function. The domains of $\theta(\A(a))$ and $\A(\theta(a))$ are the same, since we have seen that $\theta(\A(a))(x)$ is defined precisely when $x \compo \A(a) = x$, which is when $x \compo a = 0$, which is precisely when $\A(\theta(a))(x)$ is defined. 

 % This implies $\A(a \intersect b) \intersect \D(a) \neq 0$ in any representable algebra, such as $\algebra A$ is. Take an atom $x$ with $x \leq \A(a \intersect b) \intersect \D(a)$. Then $x \compo a$ is nonzero and $x \compo b \neq x \compo a$, again by inferences valid on any representable algebra. So $\theta(a) (x)$ equals $x \compo a$ and $\theta(b)(x)$ does not equal $x \compo a$ (it may be undefined). Hence $\theta$ maps $a$ and $b$ to distinct partial functions. This completes the proof that $\theta$ is a representation of $\algebra{A}$ by partial functions.
 
 \smallskip
 
 This completes the proof that $\theta$ is a representation. Lastly, we prove the final two claims. 
 
 \smallskip
 
\noindent\eqref{claim1} \,Suppose that composition is completely left-distributive over joins. Let $S$ be a subset of $\algebra{A}$ such that $\join S$ exists. Let $x, y \in \At(\algebra{A})$. Then
\begin{align*}
 & (x, y) \in \bigcup \theta[S] \\
\implies \quad & (x, y) \in \theta(s) &\quad& \text{for some }s \in S \\
\implies \quad & (x, y) \in \theta(\join S) &\quad& \text{as }\join S \geq s
\shortintertext{and similarly for $(x/{\sim},y)$. Conversely,}
 & (x, y) \in \theta(\join S) \\
\implies \quad & x \compo \join S = y & \\
\implies \quad & \join(\{x\} \compo S) = y &\quad& \text{as }\compo\text{ is completely left-distributive over joins} \\
\implies \quad & x \compo s = y &\quad& \text{for some }s \in S\text{, since }y\text{ is an atom} \\
\implies \quad & (x, y) \in \theta(s) &\quad& \text{for some }s \in S \\
\implies \quad &  (x, y) \in \bigcup \theta[S]\text{.}
\end{align*}
The proof that $(x/{\sim}, y) \in \theta(\join S) \implies  (x/{\sim}, y) \in \bigcup \theta[S]$ is similar, using instead left-distributivity of $\rest$ over joins (\Cref{lemma:left_dist_rest}). We conclude that $\theta(\join S) = \bigcup \theta[S]$. 

\smallskip

\noindent\eqref{claim2} \,Suppose instead that composition is completely left-distributive over meets. Let $S$ be a nonempty subset of $\algebra{A}$ such that $\meet S$ exists. Let $x, y \in \At(\algebra{A})$. Then
\begin{align*}
  & (x, y) \in \theta(\meet S) &\quad& \\
\implies \quad & (x, y) \in \theta(s) &\quad& \text{for all }s \in S,\text{ as }\meet S \leq s \\
\implies \quad& (x, y) \in \bigcap \theta[S] 
\shortintertext{and similarly for $(x/{\sim},y)$. Conversely,}
 &  (x, y) \in \bigcap \theta[S]\\
\implies \quad & (x, y) \in \theta(s) &\quad& \text{for all }s \in S \\
\implies \quad & x \compo s = y &\quad& \text{for all }s \in S \\
\implies \quad & \meet(\{x\} \compo S) = y &\\
\implies \quad & x \compo \meet S = y &\quad& \text{as }\compo\text{ is completely left-distributive over meets}  \\
\implies \quad & (x, y) \in \theta(\meet S).
\end{align*}
The proof that $(x/{\sim}, y) \in \bigcap \theta[S] \implies  (x/{\sim}, y) \in \theta(\meet S)$ is similar, using instead left-distributivity of $\rest$ over meets (\Cref{lemma:left_dist_rest_meets}). We conclude that $\theta(\meet S) = \bigcap \theta[S]$.
\end{proof}

We can also use \Cref{prop:rep} to prove that, in contrast to our signatures that contain composition, for our signatures that do \emph{not} contain composition, being atomic is sufficient for a representable algebra to be (meet-)completely representable.

\begin{lemma}\label{lemma:equip}
Let $\{\arest\} \subseteq \sigma \subseteq \{\arest, \intersect, \updatesymb, \pref\}$, and let $\algebra{A}$ be a $\sigma$-algebra. Suppose $\algebra A$ is representable by partial functions and the atoms of $\algebra A$ are separating. Then $\algebra A$ is the $\sigma$-reduct of a $(\sigma \cup \{\compo\})$-algebra that is representable by partial functions, whose atoms are separating, and for which composition is completely left-distributive over meets.
\end{lemma}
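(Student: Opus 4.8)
The plan is to equip $\algebra{A}$ with the simplest conceivable composition, namely the constant operation $a \compo b \coloneqq 0$, and then to show that this choice meets all the requirements. The key observation is that each of the operations $\arest, \intersect, \updatesymb, \pref$ is \emph{fibre-local}: the value (and the definedness) of the result at a domain point $x$ depends only on the values and definedness of the arguments at $x$, and never relates distinct points or feeds an output back in as an input. Composition is the one operation in the full signature that does relate inputs to outputs, and it is precisely its absence from $\sigma$ that will let us trivialise it.

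Concretely, I would start from any representation $\phi$ of the $\sigma$-algebra $\algebra{A}$ by partial functions on a base $X$, which exists by hypothesis, and then \emph{disjointify} the input and output roles. Relabel each pair $(x,y) \in \phi(a)$ as $((x,0),(y,1))$, obtaining $\psi(a) \coloneqq \{((x,0),(y,1)) \mid (x,y) \in \phi(a)\}$, a partial function on the base $X \times \{0\} \amalg X \times \{1\}$ whose domain lies in $X \times \{0\}$ and whose range lies in $X \times \{1\}$. Since the input relabelling $x \mapsto (x,0)$ and the output relabelling $y \mapsto (y,1)$ are both injective, domain comparisons and value comparisons are preserved, so $\psi$ represents each of $\arest, \intersect, \updatesymb, \pref$ correctly. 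I expect this fibre-by-fibre verification to be the only place requiring any care, and it is entirely mechanical: one checks that tagging commutes with each operation, which works because the output tag is always $1$ regardless of which argument supplies the value.

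The payoff is that, in the representation $\psi$, the genuine composition of any two representing functions is empty: a witness $y$ to $(x,z) \in \psi(a) \compo \psi(b)$ would have to lie in both $X \times \{1\}$ (as an output of $\psi(a)$) and $X \times \{0\}$ (as an input of $\psi(b)$), which is impossible. Hence $\psi(a) \compo \psi(b) = \varnothing = \psi(0)$, so defining $a \compo b \coloneqq 0$ on $\algebra{A}$ yields a $(\sigma \cup \{\compo\})$-algebra $\algebra{A}^+$ of which $\psi$ is a genuine representation by partial functions and whose $\sigma$-reduct is exactly $\algebra{A}$.

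Finally, the two remaining properties are immediate. The order $\leq$, and hence the atoms of $\algebra{A}^+$, are determined by $\rest$ (definable from $\arest \in \sigma$), so they coincide with those of $\algebra{A}$; thus the atoms of $\algebra{A}^+$ are separating by hypothesis. And composition is completely left-distributive over meets for the trivial reason that, for nonempty $S$ with $\meet S$ existing, $a \compo \meet S = 0$, while $\{a\} \compo S = \{0\}$ and so $\meet(\{a\} \compo S) = 0$ as well. The conceptual heart of the argument is therefore the single realisation that the composition-free operations survive disjointification while composition is annihilated by it; everything else is bookkeeping.
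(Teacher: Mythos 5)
Your proposal is correct and takes essentially the same approach as the paper: both define $a \compo b \coloneqq 0$ and observe that the order (hence the atoms) is untouched and that complete left-distributivity over meets holds trivially because both sides of every instance evaluate to $0$. The only difference is that where the paper cites Section~3 of Jackson and Stokes for representability of the expanded algebra, you prove it directly via the input/output tagging construction, which is a correct, self-contained replacement for that citation.
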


\begin{proof}
Define $a \compo b = 0$ for all $a, b \in \algebra A$. It is proved in Section~3 of \cite{10.1093jigpaljzac058} that this yields a $(\sigma \cup \{\compo\})$-algebra that is representable by partial functions. Clearly the atoms of this algebra are separating, because the definition of the ordering is unaffected. It is also clear that composition is completely left-distributive over meets, because both sides of any instance of the law evaluate to $0$.
\end{proof}

\begin{corollary}\label{corollary:representable_and_atomic}
Let $\{\arest\} \subseteq \sigma \subseteq \{\arest, \intersect, \updatesymb, \pref\}$, and let $\algebra{A}$ be a $\sigma$-algebra. Then $\algebra A$ is (meet-)completely representable by partial functions if and only if it is representable by partial functions and its atoms are separating.
\end{corollary}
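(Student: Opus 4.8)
The plan is to prove both directions by assembling results already established in the excerpt. The forward direction ((meet-)completely representable $\implies$ representable with separating atoms) is essentially immediate: if $\algebra A$ is meet-completely representable, then \Cref{cor:separating} tells us the atoms are separating, and representability is trivial since a meet-complete representation is in particular a representation. For the join-complete version, the atoms being separating follows by the same chain of reasoning via local completeness; indeed I would note that, since $\{\arest\} \subseteq \sigma$ always holds here and meet-complete $\implies$ locally complete $\implies$ join complete (from \Cref{lemma:boolean} and \Cref{lemma:local_join}), the various notions of completeness collapse appropriately, and in every case a complete representation yields separating atoms via \Cref{prop:at-com}. So the forward direction needs only a sentence or two citing these prior results.

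The substantive direction is the converse: assuming $\algebra A$ is representable with separating atoms, I would construct a meet-complete (hence join-complete) representation. The key move is to pass through \Cref{lemma:equip}, which equips $\algebra A$ with a trivial composition $a \compo b = 0$, producing a $(\sigma \cup \{\compo\})$-algebra $\algebra A'$ that is representable, has separating atoms, and in which composition is completely left-distributive over meets (and, I would observe, vacuously over joins as well, since both sides always evaluate to $0$). Now $\algebra A'$ satisfies exactly the hypotheses of \Cref{prop:rep}: its signature sits in $\{\arest, \compo\} \subseteq \sigma' \subseteq \{\arest, \compo, \intersect, \updatesymb, \pref, \D, \A\}$, it is representable with separating atoms. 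Applying \Cref{prop:rep} gives a representation $\theta$ of $\algebra A'$, and by parts \eqref{claim1} and \eqref{claim2} of that proposition—using the complete left-distributivity over joins and over meets that the trivial composition guarantees—the representation $\theta$ is both join complete and meet complete.

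The final step is to descend from $\algebra A'$ back to $\algebra A$. Since $\algebra A$ is the $\sigma$-reduct of $\algebra A'$, restricting $\theta$ to the original signature $\sigma$ yields a representation of $\algebra A$ by partial functions. Completeness is a property of the underlying poset and the map $\theta$, both of which are unchanged by forgetting the composition operation (the order $\leq$ is defined via $\arest$, which lies in $\sigma$); hence the restricted representation remains meet complete, and therefore also join complete. This establishes that $\algebra A$ is (meet-)completely representable.

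I expect the main subtlety, rather than the main obstacle, to be the bookkeeping about which notion of ``complete'' is in play: because $\sigma$ may or may not contain $\intersect$, I must be careful to phrase the conclusion as meet-complete representability (the strongest relevant notion for the lower layer of \Cref{figure:signatures}) and then invoke the implication chain meet complete $\implies$ locally complete $\implies$ join complete to cover the parenthetical. The genuinely nontrivial content has already been discharged in \Cref{prop:rep} and \Cref{lemma:equip}, so this corollary is a short assembly argument; the only care needed is to check that reducting to $\sigma$ preserves completeness, which holds because the order and the representing map are defined independently of $\compo$.
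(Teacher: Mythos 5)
Your proposal is correct and follows essentially the same route as the paper: the forward direction is immediate from \Cref{cor:separating}, and the converse passes through \Cref{lemma:equip} to equip $\algebra A$ with the trivial composition, applies \Cref{prop:rep} (claim \eqref{claim2}) to get a meet-complete representation of the expansion, and then observes that this restricts to a (meet-)complete representation of the $\sigma$-reduct, exactly as in the paper. One caution about a side remark in your forward direction: for signatures without $\intersect$ the notions do \emph{not} collapse, and join-complete representability does not imply that the atoms are separating (cf.\ \Cref{example1}), so the parenthetical in the corollary must be read, as you yourself note in your final paragraph, as meet-complete representability rather than join-complete.
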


\begin{proof}
If $\algebra A$ is (meet-)completely representable then it is by definition representable, and by \Cref{cor:separating} its atoms are separating. 

Conversely, if $\algebra A$ is representable and its atoms are separating, then by \Cref{lemma:equip} and \Cref{prop:rep}, it is the $\sigma$-reduct of a $(\sigma \cup \{\compo\})$-algebra that is (meet-)completely representable. Any (meet-)complete representation of an expansion of $\algebra A$ is a (meet\mbox{-)}{\allowbreak}complete representation of $\algebra A$; hence $\algebra A$ is (meet-)completely representable.
\end{proof}

\section{Axiomatising the classes}\label{axiom}

In this final section, we use the conditions for complete representability that we have uncovered to give finite first-order axiomatisations of the complete-representa\-tion classes.

We start with the signatures not containing composition.

\begin{theorem}\label{theorem:no_composition}
Let $\{\arest\} \subseteq \sigma \subseteq \{\arest, \intersect, \updatesymb, \pref\}$. Then the class of $\sigma$-algebras that are (meet-)completely representable by partial functions is a basic elementary class, axiomatisable by a universal-existential-universal first-order sentence.
\end{theorem}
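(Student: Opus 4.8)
The plan is to translate the conditions for complete representability established in \Cref{corollary:representable_and_atomic} into a single first-order sentence of the required quantifier shape. By that corollary, for $\{\arest\} \subseteq \sigma \subseteq \{\arest, \intersect, \updatesymb, \pref\}$, a $\sigma$-algebra is (meet-)completely representable if and only if it is representable by partial functions \emph{and} its atoms are separating. Since \Cref{thm:jackson-stokes} tells us that the representable $\sigma$-algebras form a finitely based (quasi)variety, representability is already captured by finitely many universally quantified (quasi)equations; these are $\forall$-sentences and hence impose no difficulty on the quantifier-complexity count. So the entire task reduces to expressing ``the atoms are separating'' by a sentence of the form $\forall\exists\forall$, and then conjoining it with the finite axiomatisation of representability.

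First I would write down the first-order definition of the order $\leq$ using \eqref{define_order}, namely $a \leq b \iff a \rest b = a$, recalling that $\rest$ is term-definable from $\arest$ (as $(a \arest b) \arest b$) and that $0$ is term-definable as $a \arest a$. Using these I would express ``$c$ is an atom'' by the formula $c \neq 0 \wedge \forall d\,(d \leq c \rightarrow (d = 0 \vee d = c))$, which is a single universal quantifier over $d$. The separation property itself reads: for all $a, b$, if $a \not\leq b$ then there exists $c$ such that $c$ is an atom, $c \leq a$, and $c \not\leq b$. Assembling these pieces, the separating-atoms condition becomes
\[
\forall a\,\forall b\,\Bigl( a \not\leq b \rightarrow \exists c\,\bigl( c \leq a \wedge c \not\leq b \wedge c \neq 0 \wedge \forall d\,(d \leq c \rightarrow (d = 0 \vee d = c))\bigr)\Bigr),
\]
where each occurrence of $\leq$ and $0$ is replaced by its defining $\arest$-term. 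Prenexing this yields exactly the pattern $\forall\forall\exists\forall$, which collapses to $\forall\exists\forall$ after grouping adjacent like quantifiers.

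The final step is to combine this with the representability axioms. The representability axiomatisation is a conjunction of $\forall$-sentences (or universally quantified quasi-equations, which are still $\forall$-sentences); conjoining a purely universal sentence with a $\forall\exists\forall$ sentence and prenexing keeps the result within $\forall\exists\forall$, since one may pad the universal sentence's quantifier block with dummy existentials and universals to align the two prefixes before taking the conjunction inside. Thus the conjunction axiomatises precisely the representable algebras with separating atoms, which by \Cref{corollary:representable_and_atomic} is the (meet-)completely representable class, and it has the desired quantifier shape; being a single sentence, the class is moreover a \emph{basic} elementary class.

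I expect the only real subtlety to be the quantifier bookkeeping: one must verify that defining $\leq$ and atomicity through $\arest$-terms does not secretly introduce extra quantifier alternations (it does not, since these are quantifier-free term substitutions), and that merging the universal representability axioms with the separating-atoms sentence genuinely stays at the $\forall\exists\forall$ level rather than creeping up to $\forall\exists\forall\exists$. This is routine but is the step where care is needed; everything else is a direct appeal to \Cref{corollary:representable_and_atomic} and \Cref{thm:jackson-stokes}.
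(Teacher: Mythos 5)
Your proposal is correct and follows exactly the paper's own argument: reduce via \Cref{corollary:representable_and_atomic} to ``representable with separating atoms'', use \Cref{thm:jackson-stokes} for the universal (quasiequational) part, and observe that the separating-atoms condition prenexes to $\forall\exists\forall$. The paper states this in three sentences; you have merely spelled out the routine quantifier bookkeeping it leaves implicit, and your bookkeeping is accurate.
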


\begin{proof}
By \Cref{corollary:representable_and_atomic}, the (meet-)completely representable algebras are precisely the representable algebras whose atoms are separating. By \Cref{thm:jackson-stokes}, representability can be axiomatised by a finite conjunction of quasiequations. The atoms being separating is (also) a universal-existential-universal first-order property.
\end{proof}

We know from \Cref{axioms} that no existential-universal-existential axiomatisation is possible; hence we have determined the precise amount of quantifier alternation necessary to axiomatise the classes.

To treat the signatures containing composition and intersection, we must first axiomatise the property of being completely left-distributive over \emph{joins}. The following notion is straightforwardly equivalent to the atoms being separating.

\begin{definition}
A poset $\algebra{P}$ is \defn{atomistic} if its atoms are join dense in $\algebra{P}$. That is to say, every element of $\algebra{P}$ is the join of the atoms less than or equal to it.
\end{definition}

%It is clear that for a poset to be atomistic, its atoms must be separating. For $\{\arest\}$-algebras representable by partial functions, the converse is also true.

%\begin{lemma}\label{atomistic}
%Let $\sigma$ be any signature containing $\arest$, and let $\algebra{A}$ be a $\sigma$-algebra that is representable by partial functions. If the atoms of $\algebra{A}$ are separating, then $\algebra A$ is atomistic.
%\end{lemma}

%\begin{proof}
% Suppose the atoms of $\algebra{A}$ are separating and let $a \in \algebra{A}$. By \Cref{lemma:boolean}, the algebra $\down a$ is a Boolean algebra, and clearly it is atomic. It is well known that atomic Boolean algebras are atomistic. So we have
%\[
%a = \join_{\down a} \{ x \in \At(\down a) \mid x \leq a \} = \join_{\algebra{A}} \{ x \in \At(\down a) \mid x \leq a \} = \join_{\algebra{A}} \{ x \in \At(\algebra{A}) \mid x \leq a \}\text{.}
%\]
%The second equality holds because any upper bound $c \in \algebra{A}$ for $\{ x \in \At(\down a) \mid x \leq a \}$ is above an upper bound in $\down a$, for example $c \intersect a$. Hence the least upper bound in $\down a$ is least in $\algebra{A}$ also.
%\end{proof}

\begin{lemma}\label{lemma:phi}
Let $\sigma$ be any signature including $\{\arest,\compo\}$, and let $\algebra{A}$ be a $\sigma$-algebra that is representable by partial functions and whose atoms are separating. Let $\varphi$ be the first-order sentence asserting that for any $a, b, c$, if $c \geq a \compo x$ for all atoms $x$ less than or equal to $b$, then $c \geq a \compo b$. Then composition is completely left-distributive over joins if and only if $\algebra{A} \models \varphi$.
\end{lemma}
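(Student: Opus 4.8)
The plan is to prove the two implications separately, making essential use of \Cref{lemma:left_dist_rest}, which tells us that $\rest$ is completely left-distributive over joins (this is available precisely because the atoms of $\algebra{A}$ are separating). I will also use freely that ``the atoms are separating'' is equivalent to $\algebra{A}$ being atomistic, so that every $b \in \algebra{A}$ satisfies $b = \join\{x \in \At(\algebra{A}) \mid x \leq b\}$.

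For the forward implication, suppose composition is completely left-distributive over joins, and fix $a, b, c$ with $c \geq a \compo x$ for every atom $x \leq b$. Put $S = \{x \in \At(\algebra{A}) \mid x \leq b\}$. Since $\algebra{A}$ is atomistic, $\join S = b$, so complete left-distributivity gives $a \compo b = a \compo \join S = \join(\{a\} \compo S)$. As $c$ is by hypothesis an upper bound for $\{a \compo x \mid x \in S\}$, we obtain $c \geq \join(\{a\} \compo S) = a \compo b$, which is exactly the conclusion demanded by $\varphi$.

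For the converse, suppose $\algebra{A} \models \varphi$, let $S \subseteq \algebra{A}$ be such that $\join S$ exists, and let $a \in \algebra{A}$. Monotonicity of composition makes $a \compo \join S$ an upper bound for $\{a \compo s \mid s \in S\}$, so it suffices to show it is the least one. Let $c$ be an arbitrary upper bound. Applying $\varphi$ with $b \coloneqq \join S$, to conclude $c \geq a \compo \join S$ it is enough to prove that $c \geq a \compo x$ for every atom $x \leq \join S$.

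This last sub-claim is the crux of the argument, and the step I expect to be the main obstacle. The key observation is that an atom below $\join S$ must already lie below a single member of $S$. Indeed, since $\leq$ is defined by $x \leq \join S \iff x \rest \join S = x$, \Cref{lemma:left_dist_rest} gives $x = x \rest \join S = \join\{x \rest s \mid s \in S\}$. Each $x \rest s$ is therefore below the atom $x$, hence equal to $0$ or to $x$; and since the join equals $x \neq 0$, at least one $s_0 \in S$ satisfies $x \rest s_0 = x$, that is, $x \leq s_0$. Monotonicity of composition then yields $a \compo x \leq a \compo s_0 \leq c$, which proves the sub-claim and hence the converse. The whole difficulty is concentrated in this reduction from ``$x \leq \join S$'' to ``$x \leq s_0$ for some $s_0 \in S$''; once \Cref{lemma:left_dist_rest} is invoked to express $x$ as the join of the $x \rest s$, atomicity of $x$ does the rest.
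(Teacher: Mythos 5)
Your proof is correct, and its skeleton matches the paper's: the forward direction (atomisticity plus complete left-distributivity applied to $S = \{x \in \At(\algebra A) \mid x \leq b\}$) is identical, and the converse reduces, exactly as in the paper, via $\varphi$ with $b = \join S$, to the crux sub-claim that every atom $x \leq \join S$ lies below some single $s_0 \in S$. Where you genuinely diverge is in how that sub-claim is established. The paper works inside the Boolean algebra $\down \join S$ furnished by \Cref{lemma:boolean}: for an atom $x$ there, $x \nleq s$ is equivalent to $s \leq \compl{x}$, so if $x$ were below no member of $S$ then $\compl{x}$ would be an upper bound for $S$, giving $x \leq \join S \leq \compl{x}$ and forcing $x = 0$; this also requires the (easy) identification of atoms of $\down \join S$ with atoms of $\algebra A$ below $\join S$. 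You instead invoke \Cref{lemma:left_dist_rest} to write $x = x \rest \join S = \join\{x \rest s \mid s \in S\}$ and let atomicity of $x$ produce $s_0$ with $x \rest s_0 = x$, i.e.\ $x \leq s_0$ by the definition of $\leq$. This is legitimate: the hypotheses of the present lemma (representability and separating atoms) are precisely the hypotheses of \Cref{lemma:left_dist_rest}, so no circularity arises, though it is worth knowing that the proof of \Cref{lemma:left_dist_rest} itself runs essentially the same separating-atoms/Boolean argument that the paper deploys directly here; your route is thus more modular but not more elementary. One step you handle with appropriate care, and rightly so: $x \rest s$ is a restriction of $s$, not of $x$, so $x \rest s \leq x$ is \emph{not} automatic; it holds here only because each $x \rest s$ is below the join $\join\{x \rest s' \mid s' \in S\}$, which equals $x$ --- exactly the justification you give.
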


\begin{proof}
Suppose first that composition is completely left-distributive over joins. As the atoms of $\algebra{A}$ are separating, $\algebra A$ is atomistic. So for any $a, b \in \algebra{A}$ we have
\[
a \compo b = a \compo \join \{x \in \At(\algebra{A}) \mid x \leq b\} = \join (\{a\} \compo \{x \in \At(\algebra{A}) \mid x \leq b\})
\]
and so $\varphi$ holds.

Now suppose that $\algebra{A} \models \varphi$. Let $a \in \algebra{A}$ and let $S$ be a subset of $\algebra{A}$ such that $\join S$ exists. Then certainly $a \compo \join S$ is an upper bound for $\{a\} \compo S$. To show it is the least upper bound, let $c$ be an arbitrary upper bound for $\{a\} \compo S$. Then
\begin{align*}
&\text{for all }s \in S &\quad& c \geq a \compo s \\
\implies \quad& \text{for all }s \in S\text{ and }x \in \At(\down \join S)\text{ with }x \leq s&& c \geq a \compo x  \\
\implies \quad& \text{for all }x \in \At(\down \join S) && c \geq a \compo x \\
\implies \quad& \text{for all }x \in \At(\algebra{A})\text{ with }x \leq \join S && c \geq a \compo x \\
\implies \quad&&& c \geq a \compo \join S\text{.}
\end{align*}
The third line follows from the second because $x \in \At(\down \join S)$ implies $x \leq s$ for some $s \in S$. To see this, consider the Boolean algebra $\down \join S$. When $x$ is an atom, $x \nleq s$ if and only if $x \intersect s = 0$, which is equivalent to $\compl{x} \geq s$. So if $x \nleq s$ for all $s \in S$ then $\compl{x} \geq \join S$, forcing $x$ to be zero---a contradiction. The fifth line can be seen to follow from the fourth by first writing $\join S$ as the join of the atoms below it and then using $\varphi$.
\end{proof}

We now have everything we need to prove the second of our two main results.

\begin{theorem}\label{theorem:composition}
Let $\{\arest, \compo, \intersect\} \subseteq \sigma \subseteq \{\arest, \compo, \intersect, \updatesymb, \pref, \D, \A\}$. Then the class of $\sigma$-algebras that are completely representable by partial functions is a basic elementary class, axiomatisable by a universal-existential-universal first-order sentence.
\end{theorem}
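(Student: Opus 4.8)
Theorem~\ref{theorem:composition} asserts that for any signature $\sigma$ with $\{\arest, \compo, \intersect\} \subseteq \sigma \subseteq \{\arest, \compo, \intersect, \updatesymb, \pref, \D, \A\}$, the completely representable $\sigma$-algebras form a basic elementary class axiomatisable by a universal-existential-universal ($\forall\exists\forall$) sentence. Because $\intersect$ is present, by the corollary to Lemma~\ref{lemma:boolean} the notions of meet-complete and join-complete representation coincide, so "completely representable" is unambiguous. My plan is to assemble a finite list of first-order conditions, each of which is either already known to be $\forall\exists\forall$ (or simpler) or can be shown to be so, whose conjunction exactly characterises complete representability; a finite conjunction of $\forall\exists\forall$ sentences is again $\forall\exists\forall$ (after suitable prenexing, the block structure is preserved up to the allowed three alternations), so this yields a single such sentence and hence a basic elementary class.

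**Identifying the conditions.** I would claim that for these signatures, $\algebra{A}$ is completely representable if and only if: (i) $\algebra{A}$ is representable by partial functions; (ii) the atoms of $\algebra{A}$ are separating; and (iii) composition is completely left-distributive over meets. The forward direction is already essentially in hand: representability is trivial, the atoms being separating is Corollary~\ref{cor:separating}, and complete left-distributivity over meets is the meet half of Lemma~\ref{lemma:compdist}. For the converse, assume (i)--(iii). By Proposition~\ref{prop:rep}, which applies precisely because $\{\arest,\compo\}\subseteq\sigma$ and the atoms are separating, the map $\theta$ defined there is a representation; and by part~\eqref{claim2} of that proposition, condition (iii) makes $\theta$ meet complete. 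Since $\intersect \in \sigma$, meet complete and join complete coincide, so $\theta$ is a complete representation. Thus conditions (i)--(iii) are necessary and sufficient.

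**Checking each condition is $\forall\exists\forall$.** Condition (i) is a finite conjunction of quasiequations by Theorem~\ref{thm:jackson-stokes}, hence universal, which is a special case of $\forall\exists\forall$. Condition (ii), the atoms being separating, is a $\forall\exists\forall$ property: "for all $a,b$, if $a \not\leq b$ then there exists $c$ such that $c$ is an atom, $c \leq a$, and $c \not\leq b$", where "$c$ is an atom" is itself a universal statement ($c \neq 0$ and for all $d$, $d \rest c = d$ implies $d = 0$ or $d = c$). The only delicate point is condition (iii): complete left-distributivity over meets quantifies over arbitrary subsets $S$, so a priori it is not first-order. The key manoeuvre is to replace it by an equivalent first-order sentence. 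Here I would use Lemma~\ref{lemma:phi}, which—given that the algebra is already representable with separating atoms—provides a single first-order sentence $\varphi$ equivalent to complete left-distributivity over \emph{joins}; and then I would argue that, in the presence of $\intersect$, left-distributivity over meets is captured analogously, or better, that the Boolean-algebra duality inside each $\down a$ (Lemma~\ref{lemma:boolean}) lets me transfer the join version to the meet version.

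**The main obstacle.** The crux is exactly this last step: showing that complete left-distributivity over \emph{meets} is first-order axiomatisable. Lemma~\ref{lemma:phi} is stated for joins, and the natural atomistic argument (every element is the join of the atoms below it) runs in the join direction. For meets one cannot simply dualise, because an element need not be the meet of anything atom-like above it. The cleanest route is to observe that, since $\intersect \in \sigma$, the equivalence of join- and meet-complete representations (the corollary to Lemma~\ref{lemma:boolean}) means that once representability and separating atoms are secured, a representation is meet complete if and only if it is join complete; and by Proposition~\ref{prop:rep} together with Lemma~\ref{lemma:compdist}, meet-complete representability is equivalent to complete left-distributivity over joins under these standing hypotheses. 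Hence I can drop condition (iii) in its meet form and replace it by the join-distributivity sentence $\varphi$ of Lemma~\ref{lemma:phi}, which is first-order and $\forall\exists\forall$. The proof then reduces to assembling (i), (ii), and $\varphi$ into one sentence and invoking the chain Proposition~\ref{prop:rep}$\to$Lemma~\ref{lemma:compdist}$\to$corollary of Lemma~\ref{lemma:boolean} to verify that their conjunction is exactly complete representability. I would write $\varphi$ out explicitly, confirm its quantifier prefix is $\forall\exists\forall$ (the "$x$ ranges over atoms below $b$" clause contributes the inner universal block guarded by the atom predicate), and conclude.
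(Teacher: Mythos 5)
Your proposal is correct and, after its self-correcting detour through the meet form of left-distributivity, arrives at exactly the paper's argument: complete representability is characterised as representability (\Cref{thm:jackson-stokes}) plus separating atoms (\Cref{cor:separating}) plus the join-distributivity sentence $\varphi$ of \Cref{lemma:phi}, with sufficiency verified via \Cref{prop:rep}, \Cref{lemma:compdist}, and the corollary of \Cref{lemma:boolean}. You rightly recognise that the meet version is not directly axiomatised and that the join version suffices because $\intersect \in \sigma$ makes join- and meet-complete representations coincide, which is precisely the route the paper takes.
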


\begin{proof}
By \Cref{cor:separating}, \Cref{lemma:compdist} and \Cref{prop:rep}, a $\sigma$-algebra is completely representable by partial functions if and only if it is representable by partial functions, its atoms are separating, and composition is completely left-distributive over joins. By \Cref{thm:jackson-stokes}, representability can be axiomatised by a finite conjunction of quasiequations. The atoms being separating is (also) a universal-existential-universal first-order property. By \Cref{lemma:phi}, in the presence of the axioms for the first two properties, the property that composition is completely left-distributive over joins can be written as a first-order sentence, and it is easy to verify that the sentence in question is also universal-existential-universal.
\end{proof}

%Any attempt at writing down our axioms will readily reveal that each can be expressed in a universal-existential-universal form. 
As before, we know from \Cref{axioms} that no existential-universal-existential axiomatisation is possible, and hence we have determined the precise amount of quantifier alternation necessary to axiomatise the classes.

\section{Open problems}\label{section:problems}

In this section, we mention some open problems relating to complete representability by partial functions. There are too many possible combinations of operations to be worth listing all of them here, so we only highlight some signatures that have a close connection to existing work.

  The first problem concerns the signatures for which \Cref{prop:rep} did not lead to a first-order axiomatisation of the class of meet-completely representable algebras, that is, the eight signatures depicted in \Cref{figure:signatures} that contain composition but not intersection.

\begin{problem}
For signatures $\{\arest, \compo\} \subseteq \sigma \subseteq \{\arest, \compo, \updatesymb, \pref, \D, \A\}$, determine the axiomatisability of the class of $\sigma$-algebras that are meet-completely representable by partial functions.
\end{problem}

  The second problem concerns less expressive signatures than those investigated in this paper. In \cite{10.1093jigpaljzac058}, the base signature is $\{\rest\}$ rather than $\{\arest\}$, and Jackson and Stokes axiomatise the representable algebras for several signatures that cannot express $\arest$. For these signatures, due to results on distributive lattices \cite{egrot}, we cannot expect the notions of meet-complete representation and join-complete representation to coincide or even to be related by an implication.

\begin{problem}\label{problem:less}
For signatures $\{\rest\} \subseteq \sigma \subseteq \{\rest, \compo,\intersect, \updatesymb, \pref, \D\}$, determine the axiomatisability of the class of $\sigma$-algebras that are meet-completely representable by partial functions and the class of $\sigma$-algebras that are join-completely representable by partial functions.
\end{problem}

Note that for a handful of signatures covered by \Cref{problem:less}, even the plain representation class is still to be axiomatised.

The last problem concerns more expressive signatures than those in this paper. The unary \defn{range} operation $\R$ is the operation of taking the diagonal of the range of a function:
\[\R(f) = \{(y, y) \in X^2 \mid y \in \ran(f)\}\text{.}\]
This operation is often studied \cite{1182.20058,hirsch,finiterep}, with several of the representation classes having been axiomatised but none of the complete representation classes. 
Note that if range had been included in our signature then the function $\theta$ in \Cref{prop:rep} would not be a representation, as it would not represent range correctly. \Cref{fig:range} shows how this can happen. The atom $f$ satisfies $f \compo \R(g) = f$ and so $(f, f) \in \theta(\R(g))$, but there is no $h$ such that $h \compo g = f$ and so $(f, f) \not\in \R(\theta(g))$. It is therefore natural to inquire about complete representability for the signatures obtained by adding $\R$ to the signatures resolved in this paper. %Hence questions about the axiomatisability of the complete representation class for the signature $(\compo,  \intersect, \A, \R)$ remain open. Equally for the less expressive signature $(\compo, \intersect, \D)$, where the meet-complete and join-complete representations do not coincide.

\begin{figure}[H]
\centering
\begin{tikzpicture}
\draw (0,1)--(2.5,0);
\draw[->](0,1)--(1.25,0.5);
\draw[fill] (0,1) circle [radius=0.05];
\node [above] at (1.25,0.55) {$f$};

\draw (0,-1)--(2.5,0);
\draw[->](0,-1)--(1.25,-0.5);
\draw[fill] (0,-1) circle [radius=0.05];
\draw[fill] (2.5,0) circle [radius=0.05];
\node [below] at (1.25,-0.55) {$g$};
\end{tikzpicture}
\caption{Algebra for which $\theta$ does not represent range correctly}\label{fig:range}
\end{figure}
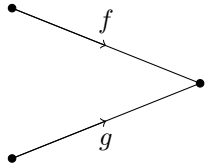

\begin{problem}
For signatures $\{\arest, \R\} \subseteq \sigma \subseteq \{\arest, \R, \intersect, \updatesymb, \pref\}$ or $\{\arest, \compo, \R\} \subseteq \sigma \subseteq \{\arest, \compo, \R, \intersect, \updatesymb, \pref, \D, \A\}$, determine the axiomatisability of the class of $\sigma$-algebras that are (meet-)completely representable by partial functions.
\end{problem}

\bibliographystyle{amsplain}

\bibliography{../brettbib}

\end{document}